\documentclass[10pt]{amsproc}
\usepackage{hyperref,color,ytableau,MnSymbol,graphicx}
\newtheorem{theorem}{Theorem}[subsection]
\newtheorem{lemma}[theorem]{Lemma}
\newtheorem{corollary}[theorem]{Corollary}
\theoremstyle{definition}
\newtheorem{definition}[theorem]{Definition}
\theoremstyle{remark}
\newtheorem{remark}[theorem]{Remark}
\newtheorem{example}[theorem]{Example}
\newcommand{\rowins}{\mathrm{RINS}}
\newcommand{\ins}{\mathrm{INSERT}}
\newcommand{\del}{\mathrm{DELETE}}

\newcommand{\wt}{\mathrm{wt}}
\newcommand{\shape}{\mathrm{shape}}
\newcommand{\pl}{\mathrm{pl}}
\newcommand{\ttab}{\mathrm{Tab}^\dagger}
\newcommand{\tr}{\mathrm{R}^\dagger}
\newcommand{\rp}{\mathbf{R}^+}
\newcommand{\rsk}{\mathrm{RSK}}
\newcommand{\ot}{\leftarrow}
\newcommand{\infl}{\mathrm{INFL}}
\newcommand{\supp}{\mathrm{supp}}
\title{A Timed Version of the Plactic Monoid}
\author{Amritanshu Prasad}
\address{The Institute of Mathematical Sciences, Chennai}
\address{Homi Bhabha National Institute, Mumbai}
\subjclass[2010]{05A05,68R15,05E10}
\keywords{timed words, Greene invariants, Knuth relations, plactic monoid}
\thanks{Supported by a Swarna Jayanti Fellowship of the Department of Science \& Technology, Government of India.
  These results were announced at the 83rd annual conference of the Indian Mathematical Society, held at Sri Venkateshwara University, Tirupati where the author delivered the Srinivasa Ramanujan Memorial award lecture.
An expository article on the classical version of Greene's theorem, and its extension to the timed case (based on the lecture) will appear in \emph{The Mathematics Student} \cite{math-student}.}
\begin{document}
\begin{abstract}
  Timed words are words where letters of the alphabet come with time stamps.
  We extend the definitions of semistandard tableaux, insertion, Knuth equivalence, and the plactic monoid to the setting of timed words.
  Using this, Greene's theorem is formulated and proved for timed words, and algorithms for the RSK correspondence are extended to real matrices.
\end{abstract}
\maketitle
\section{Introduction}
Lascoux and Sch\"utzenberger \cite{plaxique} introduced the plactic monoid to give a proof of the Littlewood-Richardson rule based on a strategy outlined by Robinson \cite{robinson-algo}, and ideas of Schensted \cite{schensted} and Knuth \cite{knuth}.
This theory revolves around bijections involving words and tableaux.

These bijections are restrictions to lattice points of certain volume-preserving piecewise linear bijections between convex polyhedra \cite{berkir,kir-trop,DeLoera2004,pak}.
The importance of this viewpoint is borne out in the work of Knutson and Tao, who proved the saturation of the monoid of triples $(\lambda, \mu, \nu)$ of integer partitions such that the representation $V_\lambda$ occurs in the tensor product $V_\mu\otimes V_\nu$ of representations of $GL_n(\mathbf C)$.
This led to the resolution of \emph{Horn's conjecture} on the possible sets of eigenvalues of a sum of Hermitian matrices \cite{knutsontaojams,knutsontaonotices}.

Here we develop the monoid-theoretic foundations for piecewise linear correspondences interpolating bijections involving tableaux.
This is done by generalizing the plactic monoid from the framework of the free monoid of words to the setting of \emph{timed words}.
Timed words were introduced by Alur and Dill \cite{alur-dill} in their approach to the formal verification of real-time systems using timed automata.
While words represent a sequence of events, timed words represent a sequence of events where the time of occurrence of each event is also recorded.
We use a finite version of their definition of timed words.
While each letter occurs discretely (an integer number of times) in a classical word, it appears for a positive duration (which is a real number) in a timed word.

In Section~\ref{sec:timed-tableaux} we introduce the timed versions of semistandard Young tableaux (called \emph{timed tableaux}). 
Schensted's insertion algorithm is generalized to timed tableaux in Section~\ref{sec:timed-insertion}.
Greene invariants of timed words are introduced in Section~\ref{sec:timed-greene-invar}.
Timed versions of Knuth relations are introduced in Section~\ref{sec:timed-knuth-equiv}. 
These relations are conceptually very similar to the relations introduced by Knuth in \cite{knuth}.
However, it was a delicate task to arrive at relations that are at once simple enough so that one can show that they preserve Greene invariants (Lemma~\ref{lemma:Knuth-Greene}) but also powerful enough to transform any timed word to its insertion tableau (Lemma~\ref{lemma:reduction-to-tab}).
With this groundwork, the extension of Greene's theorem to timed words becomes routine (Theorem~\ref{theorem:timed-version-greene}).

Standard properties of Knuth equivalence, such as the existence of a unique tableau in each Knuth class, and the characterization of Knuth equivalence in terms of Greene invariants are extended to timed words in Sections~\ref{sec:tabl-knuth-equiv} and \ref{sec:char-timed-knuth}.

The RSK algorithm \cite{knuth} is extended from integer matrices in Sections~\ref{sec:defin-using-timed} and \ref{sec:insert-record-algor}.
Viennot's light-and-shadows version of the Robinson-Schensted correspondence, which was extended to integer matrices in \cite{rtcv}, is now extended to real matrices.
The piecewise linear nature of these algorithms can be easily seen from the timed version of Greene's theorem.

All the algorithms described here are straightforward to implement.
An implementation in python is available at \url{http://www.imsc.res.in/~amri/timed_plactic/timed_tableau.py}. A jupyter worksheet with demos of many of the theorems and proofs in this paper is available at \url{http://www.imsc.res.in/~amri/timed_plactic/timed_tableau.ipynb} and in html format at \url{http://www.imsc.res.in/~amri/timed_plactic/timed_tableau.html}.
\section{Insertion in Timed Tableaux}
\label{sec:insertion}
\subsection{Timed Tableaux}
\label{sec:timed-tableaux}
Let $A_n=\{1,\dotsc,n\}$, to be thought of as a linearly ordered alphabet.
\begin{definition}
  [Timed Word]
  \label{definition:timed-word}
  A \emph{timed word} of length $r$ in the alphabet $A_n$ is a piecewise-constant right-continuous function $w:[0,r)\to A_n$ with finitely many discontinuities.
  We write $l(w)=r$.
  In other words, for some finite sequence $0=r_0<r_1<\dotsc<r_k=r$ of transition points, and letters $c_1,\dotsc, c_k$ in $A_n$, $w(x) = c_i$ if $r_{i-1}\leq x < r_i$.
  Given such a function, we write
  \begin{equation}
    \label{eq:exp_not}
    w = c_1^{t_1} c_2^{t_2}\dotsb c_k^{t_k},
  \end{equation}
  where $t_i = r_i-r_{i-1}$.
  We call this an \emph{exponential string} for $w$.
  The \emph{weight} of $w$ is the vector:
  \begin{displaymath}
    \wt(w) = (m_1,\dotsc,m_n),
  \end{displaymath}
  where $m_i$ is the Lebesgue measure of the pre-image of $i$ under $w$, in other words,
  \begin{displaymath}
    m_i=\mathrm{meas}(w^{-1}(i)) = \sum_{j\in \{1,\dotsc,k\},\; c_j=i} t_j.
  \end{displaymath}
\end{definition}
The exponential string, as defined above, is not unique; if two successive letters $c_i$ and $c_{i+1}$ are equal, then we can merge them, replacing $c_i^{t_i}c_{i+1}^{t_{i+1}} = c_i^{t_i+t_{i+1}}$.

The above definition is a finite variant of Definition~3.1 of Alur and Dill~\cite{alur-dill}, where $r=\infty$, and there is an infinite increasing sequence of transition points.

Given timed words $w_1$ and $w_2$, their \emph{concatenation} is defined in the most obvious manner---their exponential strings are concatenated (and if necessary, successive equal values merged).
The monoid formed by all timed words in an alphabet $A_n$, with product defined by concatenation, is denoted by $A_n^\dagger$.
The submonoid of $A_n^\dagger$ consisting of timed words where the exponents $t_1,t_2,\dotsc,t_k$ in exponential string (\ref{eq:exp_not}) are integers is the free monoid $A_n^*$ of words in the alphabet $A_n$ (see e.g., \cite[Chapter~1]{comb-words}).
\begin{definition}
  [Timed Subword]
  \label{definition:timed-subword}
  Given a timed word $w:[0,r)\to A_n$, and $S\subset [0,r)$ a finite disjoint union of intervals of the form $[a, b)\subset [0,r)$, the \emph{timed subword $w_S$ of $w$ with respect to $S$} is defined by:
  \begin{displaymath}
    w_S(t) = w(\inf\{u\in [0,r)\mid \mathrm{meas}([0,u)\cap S) \geq t\}) \text{ for } 0\leq t < \mathrm{meas}(S).
  \end{displaymath}
  Intuitively, $w_S$ is obtained from $w$ by cutting out the segments that are outside $S$.
  Given timed words $v$ and $w$, $v$ is said to be a \emph{timed subword} of $w$ if there exists $S\subset [0,r)$ as above such that $v=w_S$.
  Timed subwords $v_1,\dotsc,v_k$ of $w$ are said to be \emph{pairwise disjoint} if there exist pairwise disjoint subsets $S_1,\dotsc,S_k$ as above such that $v_i=w_{S_i}$ for $i=1,\dotsc,k$.
\end{definition}
\begin{definition}[Timed Row]
A \emph{timed row} in the alphabet $A_n$ is a weakly increasing timed word in $A_n^\dagger$.
In exponential notation every timed row is of the form $1^{t_1}\dotsb n^{t_n}$ where $t_i\geq 0$ for $i=1,\dotsc,n$.
The set of all timed rows in $A_n^\dagger$ is denoted $\tr_n$.
The set of all timed rows of length $l$ is denoted $\tr_n(l)$.
\end{definition}
\begin{definition}[Row Decomposition]
Every timed word $w$ has a unique decomposition:
\begin{displaymath}
  w = u_l u_{l-1}\dotsb u_1,
\end{displaymath}
such that $u_i$ is a timed row for each $i=1,\dotsc,l$, and $u_iu_{i-1}$ is not a row for any $i=2,\dotsc,l$.
We shall refer to such a decomposition as the \emph{row decomposition} of $w$.
\end{definition}
Given two timed rows $u$ and $v$, say that $u$ is \emph{dominated} by $v$ (denoted $u\lhd v$) if
\begin{enumerate}
\item $l(u)\geq l(v)$, and
\item $u(t)<v(t)$ for all $0\leq t<l(v)$.
\end{enumerate}
\begin{definition}
  [Real Partition]\label{definition:real-partition}
  A \emph{real partition} is a weakly decreasing finite sequence $\lambda=(\lambda_1,\dotsc,\lambda_l)$ of non-negative real numbers.
  Two real partitions are regarded as equal if one may be obtained from the other by the removal of trailing zeroes.
\end{definition}
\begin{definition}[Timed Tableau]\label{definition:timed-tableau}
  A \emph{timed tableau} in $A_n$ is a timed word $w$ in $A_n$ with row decomposition $w=u_l u_{l-1}\dotsb u_1$ such that $u_1\lhd \dotsb \lhd u_l$.
  The \emph{shape} of $w$ is the real partition $(l(u_1),l(u_2),\dotsc,l(u_l))$, and the \emph{weight} of $w$ is the weight of $w$ as a timed word (see Definition~\ref{definition:timed-word}).
  The set of all timed tableaux in $A_n$ is denoted $\ttab_n$.
  The set of all timed tableaux of shape $\lambda$ is denoted $\ttab_n(\lambda)$.
  The set of all timed tableaux of shape $\lambda$ and weight $\mu$ is denoted $\ttab_n(\lambda,\mu)$.
\end{definition}
The above is a direct generalization of the notion of the reading word of a tableau in the sense of \cite{Lascoux}.
\begin{example}
  \label{example:timed-tableau}
  $w=3^{0.8}4^{1.1}1^{1.4}2^{1.6}3^{0.7}$ is a timed tableau in $A_5$ of shape $(3.7,1.9)$ and weight $(1.4, 1.6, 1.5, 1.1,0)$.
\end{example}
\subsection{Timed Insertion}
\label{sec:timed-insertion}
Given a timed word $w$ and $0\leq a < b \leq l(w)$, according to Definition~\ref{definition:timed-subword}, $w_{[a, b)}$ is the timed word of length $b-a$ such that:
\begin{displaymath}
  w_{[a, b)}(t) = w(a+ t) \text{ for } 0\leq t<b-a.
\end{displaymath}
\begin{definition}[Timed row insertion]
  \label{definition:timed-row-insertion}
  Given a timed row $u$, a letter $c\in A_n$, and a real number $t_c\geq 0$, the insertion $\rowins(u, c^{t_c})$ of $c^{t_c}$ into $u$ is defined as follows:
  if $u(t)\leq c$ for all $0\leq t < l(u)$, then
  \begin{displaymath}
    \rowins(u, c^{t_c}) = (\emptyset, uc^{t_c}),
  \end{displaymath}
  where $\emptyset$ denotes the \emph{empty word} of length zero.
  Otherwise, there exists $0\leq t < l(u)$ such that $u(t)>c$.
  Let
  \begin{displaymath}
    t_0 = \min\{0\leq t< l(u) \mid u(t)> c\}.
  \end{displaymath}
  Define
  \begin{displaymath}
    \rowins(u, c^{t_c}) =
    \begin{cases}
      (u_{[t_0, t_0+t_c)}, u_{[0, t_0)}c^{t_c} u_{[t_0+t_c, l(u))}) & \text{if } l(u) - t_0 > t_c,\\
      (u_{[t_0, l(u))}, u_{[0, t_0)} c^{t_c}) & \text{if } l(u) - t_0 \leq t_c.
    \end{cases}
  \end{displaymath}
  When $t_c=1$ and $u$ lies in the image of $A_n^*$ in $A_n^\dagger$, this coincides with the first step of the algorithm INSERT from Knuth \cite{knuth} where $c$ is inserted into a row:
  if $\rowins(u,c^1)=(v',u')$, then $u'$ is the new row obtained after insertion, $v'$ is the letter bumped out by $c$.

  If $v$ is a row of the form $c_1^{t_1}\dotsb c_k^{t_k}$, define $\rowins(u,v)$ by induction on $k$ as follows:
  Having defined $(v',u')=\rowins(u,c_1^{t_1}\dotsb c_{k-1}^{t_{k-1}})$, let $(v'', u'')=\rowins(u',c_k^{t_k})$.
  Then define
  \begin{displaymath}
    \rowins(u,v) = (v'v'', u'').
  \end{displaymath}
\end{definition}
\begin{remark}
  The insertion of the timed row $v=c_1^{t_1}\dotsb c_k^{t_k}$ into $u$ is achieved by successively inserting $c_i^{t_i}$ as $i$ runs from $1$ to $k$.
  The segments are ejected are taken from $u$ from left to right, with no overlaps.
  It follows that, if $(v',u')=\rowins(u,v)$, then $v'$ is again a timed row.
\end{remark}
\begin{remark}
  Definition~\ref{definition:timed-row-insertion} is in terms of exponential strings, which are not uniquely associated to timed words.
  Therefore, in order that row insertion be well-defined for timed words, it is necessary to check that the result of row insertion of $c^{t_1+t_2}$ is the same as the result of row insertion of $c^{t_1}$ followed by row insertion of $c^{t_2}$.
  This is straightforward.
\end{remark}
\begin{example}
  \label{example:timed-row-ins}
  $\rowins(1^{1.4}2^{1.6}3^{0.7},1^{0.7}2^{0.2})=(2^{0.7}3^{0.2},1^{2.1}2^{1.1}3^{0.5})$.
\end{example}
\begin{definition}
  [Timed Tableau Insertion]
  \label{definition:timed-tableau-insertion}
  Let $w$ be a timed tableau with row decomposition $u_l\dotsc u_1$, and let $v$ be a timed row.
  Then $\ins(w, v)$, the insertion of $v$ into $w$, is defined as follows:
  first $v$ is inserted into $u_1$.
  If $\rowins(u_1,v)=(v_1',u_1')$, then $v_1'$ is inserted into $u_2$; if $\rowins(u_2,v_1')=(v_2',u_2')$, then $v_2'$ is inserted in $u_3$, and so on.
  This process continues, generating $v_1',\dotsc,v_l'$ and $u_1',\dotsc,u_l'$.
  $\ins(w,v)$ is defined to be $v_l'u_l'\dotsb u_1'$.
  It is quite possible that $v_l'=\emptyset$.
\end{definition}
\begin{example}
  If $w$ is the timed tableau from Example~\ref{example:timed-tableau}, then
  \begin{displaymath}
    \ins(w,1^{0.7}2^{0.2})=3^{0.7}4^{0.2}2^{0.7}3^{0.3}4^{0.9}1^{2.1}2^{1.1}3^{0.5}.
  \end{displaymath}
\end{example}
When the timed tableau $w$ lies in the image of $A_n^*$, then $\ins(w,c^1)$ is the same as the result of applying the algorithm $\text{INSERT}(c)$ from Knuth \cite{knuth} to the semistandard Young tableau $w$.
\begin{definition}
  Given real partitions $\lambda=(\lambda_1,\dotsc,\lambda_l)$ and $\mu=(\mu_1,\dotsc,\mu_{l-1})$, we say that $\mu$ \emph{interleaves} $\lambda$ if the inequalities
  \begin{displaymath}
    \lambda_1 \geq \mu_1 \geq \lambda_2 \geq \mu_2 \geq \dotsb \geq \lambda_{l-1}\geq \mu_{l-1}\geq \lambda_l 
  \end{displaymath}
  hold.
  In other words, the successive parts of $\mu$ lie in-between the successive parts of $\lambda$.
\end{definition}
\begin{theorem}
  \label{theorem:tableauness-of-insertion}
  For any timed tableau $w$ in $A_n$ and any timed row $v$ in $A_n$, $\ins(w,v)$ is again a timed tableau in $A_n$.
  We have
  \begin{displaymath}
    \wt(\ins(w,v)) = \wt(w) + \wt(v),
  \end{displaymath}
  and $\shape(w)$ interleaves $\shape(\ins(w,v))$.
\end{theorem}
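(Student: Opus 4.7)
The plan is to establish the three assertions by reducing to an iterated \emph{row-bumping} principle, analogous to its classical counterpart.

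\emph{Weight.} The pair $(v',u')=\rowins(u,c^{t_c})$ merely redistributes the letters of $u c^{t_c}$ between $v'$ and $u'$, so $\wt(v')+\wt(u')=\wt(u)+\wt(c^{t_c})$. Iterating first along the letters of $v$ (inside one $\rowins$) and then up the rows $u_1,\dotsc,u_l$ of $w$ gives $\wt(\ins(w,v))=\wt(w)+\wt(v)$.

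\emph{Row-bumping lemma.} For the remaining claims, the key technical statement I would prove is: \emph{if $u\lhd\tilde u$ are timed rows and $v$ is any timed row, then, setting $(v',u')=\rowins(u,v)$ and $(v'',\tilde u')=\rowins(\tilde u,v')$, one has $u'\lhd\tilde u'$ and $l(\tilde u')\leq l(u)$.} I would prove this by induction on the number of letter-blocks in the exponential string of $v$. In the base case $v=c^{t_c}$, let $t_0$ be the first position in $u$ with $u(t_0)>c$, so every letter of $v'$ exceeds $c$; since $u(t)<\tilde u(t)$ for $t<l(\tilde u)$, the first bumping position $\tilde t_0$ triggered by $v'$ in $\tilde u$ satisfies $\tilde t_0\leq t_0$ (or $v'$ is simply appended to $\tilde u$ if $t_0\geq l(\tilde u)$). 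A branch-by-branch case analysis using the two cases of Definition~\ref{definition:timed-row-insertion} applied to each of $u$ and $\tilde u$ then yields both the length inequality $l(\tilde u')\leq l(u)$ and the pointwise inequality $u'(t)<\tilde u'(t)$ for $0\leq t<l(\tilde u')$, using the weak monotonicity of timed rows and the hypothesis $u\lhd\tilde u$ on the parts of the intervals where the rows were not overwritten. The inductive step is then immediate from the iterative definition of $\rowins$ after the first letter-block has been processed.

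\emph{Assembling the theorem.} Applying the row-bumping lemma consecutively to the row pairs $(u_i,u_{i+1})$ of $w$ gives $u_1'\lhd u_2'\lhd\dotsb\lhd u_l'$ and $l(u_{i+1}')\leq l(u_i)$. A direct single-row argument at the top, tracing the origin of $v_l'$ to letters of $u_l$, furnishes $u_l'\lhd v_l'$ and $l(v_l')\leq l(u_l)$ when $v_l'\neq\emptyset$. Combined with the trivial $l(u_i')\geq l(u_i)$, this yields the desired interleaving of $\shape(w)$ and $\shape(\ins(w,v))$. Finally, the dominance chain ensures that $v_l'u_l'\dotsb u_1'$ is actually the row decomposition of $\ins(w,v)$: each juxtaposition $ba$ with $a\lhd b$ fails to be a row since $b(0)>a(0)$ forces a strict decrease at the seam.

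\emph{Expected obstacle.} The real work lies in the interval bookkeeping of the base case of the row-bumping lemma: because the offsets $\tilde t_0\leq t_0$ together with the two branches (pure bumping vs.\ partial appending) in each of the two insertions generate several sub-cases, the strict pointwise inequality $u'(t)<\tilde u'(t)$ must be re-verified by monotonicity on each sub-interval where the overwritten regions of $u'$ and $\tilde u'$ do not align. This step is not conceptually deep, but it is where a careful proof spends most of its effort.
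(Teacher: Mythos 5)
Your high-level architecture matches the paper's exactly: both isolate a two-row ``bumping'' lemma (your row-bumping lemma is Lemma~\ref{lemma:tableauness-of-insertion} of the paper, minus the weight bookkeeping you handle separately), and both assemble the theorem by iterating it up the rows of $w$, with the top row handled by the $y=\emptyset$ degenerate case.

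The gap is in your proof of the row-bumping lemma, and it is not merely a matter of messy bookkeeping. You induct on the number of letter-blocks of $v$, reducing to $v=c^{t_c}$, and then claim the base case reduces to ``the two cases of Definition~\ref{definition:timed-row-insertion} applied to each of $u$ and $\tilde u$.'' But when $v=c^{t_c}$ is inserted into $u$, the bumped-out segment $v'=u_{[t_0,t_0+\delta)}$ is in general a \emph{multi-block} row (it is a contiguous slice of the weakly increasing word $u$). Inserting $v'$ into $\tilde u$ is therefore itself an iterated process, not a single application of the two-branch definition, and there is no single ``first bumping position $\tilde t_0$'' to track. This is exactly where the paper introduces a second induction: not on the blocks of $v$, but on the number $k$ of blocks of $v'$, by first inserting $c^{t_1}$ (which bumps exactly one block $c_1^{t_1}$ out of $u$, giving the $k=1$ case) and then inserting $c^{t-t_1}$, which reduces $k$. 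That reduction is the key missing idea in your outline.

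There is a second, independent problem with the length inequality $l(\tilde u')\leq l(u)$. Suppose you do carry out an induction on blocks of $v$: after inserting the first block you obtain $u_1'$ with $l(u_1')\geq l(u)$, and the inductive hypothesis then gives $l(\tilde u')\leq l(u_1')$, which is the \emph{wrong} direction. The paper proves the length bound by a separate non-inductive argument: write $v'=wz$ with $w$ the part bumped from $u_{[0,l(\tilde u))}$ and $z$ the part from $u_{[l(\tilde u),l(u))}$; the domination argument shows every piece of $w$ in turn bumps something out of $\tilde u$, so the insertion of $w$ preserves $l(\tilde u)$, and then $l(\tilde u')\leq l(\tilde u)+l(z)\leq l(u)$. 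You would need this (or an equivalent) argument; your proposed induction does not yield it. Everything else in your proposal (the weight computation, $l(u_i')\geq l(u_i)$, the identification of $v_l'u_l'\dotsb u_1'$ as the row decomposition via $a\lhd b\Rightarrow ba$ not a row) is correct and matches the paper.
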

\begin{proof}
  The main step in this proof is the following lemma:
  \begin{lemma}
    \label{lemma:tableauness-of-insertion}
    Suppose $x$ and $y$ are timed rows in $A_n$ such that $x\lhd y$.
    For any timed row $v$ in $A_n$, suppose $(v',x')=\rowins(x,v)$, and $(v'',y')=\rowins(y,v')$.
    \begin{enumerate}
    \item \label{item:dom} $x'\lhd y'$
    \item \label{item:wt} $\wt(v'')+\wt(x')+\wt(y')=\wt(x)+\wt(y)+\wt(v)$
    \item \label{item:int} $l(y')\leq l(x)$.
    \end{enumerate}
  \end{lemma}
  \begin{proof}[Proof of the lemma]
    By inserting in stages, assume that $v=c^t$ for some $c\in A_n$ and some $t>0$.

    If $x$ never exceeds $c$, then $x'=xc^t$, and $y'=y$, and so $x'\lhd y'$.
    Otherwise, when $c^t$ is inserted into $x$, $v'$ is a segment of $x$ corresponding to an interval $[t_0,t_0+\delta)$ such that $x(t_0)>c$.
    This segment in $x$ is replaced by a segment $c^\delta$ to obtain $x'$.
    Let $c_1^{t_1}\dotsb c_k^{t_k}$, with $c<c_1< \dotsb <c_k$, be the exponential string of $v'$.

    Proceed by induction on $k$.
    If $k=1$, $v'=c_1^{t_1}$.
    Now $y(t_0)>x(t_0)=c_1$, so $c_1^{t_1}$ will displace a segment of $y$, one that begins to the left of $t_0$, with $c_1^{t_1}$, and so $x'\lhd y'$.

    For $k>1$, perform the insertion of $c^t$ into $x$ in two steps, first inserting $c^{t_1}$, and then inserting $c^{t-t_1}$.
    If $(v_1',x_1')=\rowins(x,c^{t_1})$, then $v_1'=c_1^{t_1}$.
    Let $(v_1'',y_1')=\rowins(y,v_1')$.
    By the $k=1$ case, $x_1'\lhd y_1'$.
    
    Now $(c_2^{t_2}\dotsb c_k^{t_k},y')=\rowins(y_1',c^{t-t_1})$, so $y'$ is obtained by inserting $c_2^{t_2}\dotsb c_k^{t_k}$ into $y_1'$.
    Therefore, by induction hypothesis, $x'\lhd y'$, proving (\ref{item:dom}).

    The assertion (\ref{item:wt}) about weights is straightforward.
    For (\ref{item:int}), observe that $v'$ is a concatenation of segments from $x$.
    Write $v'=wz$, where $w$ consists of segments that come from $u_{[0,l(y))}$ and $z$ consists of segments that come from $u_{[l(y),l(x))}$.
    Then, from the arguments in the proof of the first part of the lemma, the segments in $w$ will all replace segments of $y$, so if $(v'',y'')=\rowins(y,w)$, then $l(y'')=l(y)$.
    Now $(v',y')=\rowins(y'',z)$, whence $l(y')\leq l(y'')+l(z)\leq l(y)+[l(x)-l(y)] = l(x)$.
  \end{proof}
  We now prove that $\ins(w,v)$ is a timed tableau.
  Suppose $w$ has row decomposition $u_lu_{l-1}\dotsb u_1$.
  Using the notation of Definition~\ref{definition:timed-tableau-insertion}, and writing $v_0$ for $v$, we have $(v_i',u'_i)=\rowins(u_i,v'_{i-1})$ and $(v_{i+1}',u_{i+1}')=\rowins(u_{i+1},v'_i)$.
  The first assertion of Lemma~\ref{lemma:tableauness-of-insertion}, with $v=v'_i$, $x=u_i$, and $y=u_{i+1}$, shows that $u'_i\lhd u'_{i+1}$ for $i=1,\dotsc,l-1$.
  Taking $x=u_l$ and $y=\emptyset$ gives $u'_l\lhd v'_l$. Therefore $\ins(w,v)$ is a timed tableau.
  The third assertion of Lemma~\ref{lemma:tableauness-of-insertion}, with the same settings, gives $l(u_{i+1}')\leq l(u_i)$, showing that $\shape(w)$ interleaves $\shape(\ins(w,v)$.
  The assertion about weights in the theorem follows easily from the second assertion of Lemma~\ref{lemma:tableauness-of-insertion}.
\end{proof}
\begin{definition}
  [Insertion Tableau of a Timed Word]
  \label{definition:insertion_tableaux}
  Let $w$ be a timed word with row decomposition $u_1\dotsb u_l$.
  The insertion tableau of $w$ is defined as:
  \begin{displaymath}
    P(w) = \ins(\dotsb\ins(\ins(u_1, u_2),u_3),\dotsc,u_l).
  \end{displaymath}
\end{definition}
\begin{example}
  If $w=3^{0.8}1^{0.5}4^{1.1}1^{0.9}2^{1.6}3^{0.7}1^{0.7}2^{0.2}$ has four rows in its row decomposition.
  $P(w)$ is calculated via the following steps:
  \begin{displaymath}
    \renewcommand{\arraystretch}{1.2}
    \begin{array}{|l|l|}
      \hline
      w & P(w)\\
      \hline
      3^{0.8} & 3^{0.8}\\
      3^{0.8}1^{0.5}4^{1.1} & 3^{0.5}1^{0.5}3^{0.3}4^{1.1}\\
      3^{0.8}1^{0.5}4^{1.1}1^{0.9}2^{1.6}3^{0.7} & 3^{0.8}4^{1.1}1^{1.4}2^{1.6}3^{0.7}\\
      3^{0.8}1^{0.5}4^{1.1}1^{0.9}2^{1.6}3^{0.7}1^{0.7}2^{0.2} & 3^{0.7}4^{0.2}2^{0.7}3^{0.3}4^{0.9}1^{2.1}2^{1.1}3^{0.5}\\
      \hline
    \end{array}
    \renewcommand{\arraystretch}{1}
  \end{displaymath}
\end{example}
\begin{definition}
  [Sch\"utzenberger Involution on Timed Words]
  \label{definition:schuetzenberger-involution}
  Given $w=c_1^{t_1}\dotsb c_k^{t_k}\in A_n^\dagger$, define
  \begin{equation}
    \label{eq:sharp}
    w^\sharp = (n-c_k+1)^{t_k} \dotsb (n-c_1+1)^{t_1},
  \end{equation}
  in effect, reversing both the order on the alphabet, and the positional order of letters in the timed word.
\end{definition}
\begin{lemma}
  \label{lemma:reverse-row-insertion}
  Let $u$ and $v$ be timed rows.
  Suppose $\rowins(u,v)=(v',u')$, and $l(v')=l(v)$.
  Then $\rowins({u'}^\sharp,{v'}^\sharp)=(v^\sharp,u^\sharp)$.
\end{lemma}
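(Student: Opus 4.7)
The plan is to reduce to the case where $v$ is a single exponential segment $c^t$ by induction on the number of segments of $v$, and then to handle this base case by tracing through Definition~\ref{definition:timed-row-insertion} explicitly.

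For the reduction, suppose $v$ has $k\geq 2$ segments and write $v=v_1v_2$ where $v_1$ is the first segment. By the definition of row insertion applied to a multi-segment row, $\rowins(u,v)=(v',u')$ factors through $\rowins(u,v_1)=(v_1',u_1')$ and $\rowins(u_1',v_2)=(v_2',u')$, with $v'=v_1'v_2'$. Because $l(v_i')\leq l(v_i)$ at each stage, the global identity $l(v')=l(v)$ forces $l(v_i')=l(v_i)$ at each stage. The induction hypothesis therefore applies twice, yielding $\rowins(u_1'^\sharp,v_1'^\sharp)=(v_1^\sharp,u^\sharp)$ and $\rowins(u'^\sharp,v_2'^\sharp)=(v_2^\sharp,u_1'^\sharp)$. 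Since $v'^\sharp=v_2'^\sharp v_1'^\sharp$ is a row (the involution sends rows to rows) and row insertion processes its argument segment-by-segment from left to right, chaining the two reversed insertions gives $\rowins(u'^\sharp,v'^\sharp)=(v^\sharp,u^\sharp)$.

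For the base case $v=c^t$, I would decompose $u=ABC$ with $A=u_{[0,t_0)}$, $B=u_{[t_0,t_0+t)}$, $C=u_{[t_0+t,l(u))}$; the hypothesis $l(v')=t$ is equivalent to $l(u)-t_0\geq t$, and in that case $v'=B$ and $u'=Ac^tC$. Applying the involution throughout yields
\[
u'^\sharp = C^\sharp\cdot (n-c+1)^t\cdot A^\sharp,\qquad v'^\sharp=B^\sharp,\qquad v^\sharp=(n-c+1)^t.
\]
The decisive arithmetic is that $B$ consists of letters of value $>c$ (so $B^\sharp$ has letters $\leq n-c$), $A$ consists of letters of value $\leq c$ (so $A^\sharp$ has letters $\geq n-c+1$), and $C$ begins with a letter $\geq \max(B)$ (so $C^\sharp$ has letters $\leq n-c$). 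I would then insert $B^\sharp$ into $u'^\sharp$ one segment at a time and observe that at each step the first position of the current row with value exceeding the inserted letter lies inside the $(n-c+1)$-block, so the bumped sub-segment is drawn from that block. Since $l(B^\sharp)=t$, the cumulative bumped output totals $(n-c+1)^t=v^\sharp$, and the $(n-c+1)^t$ block is replaced in place by $B^\sharp$, giving final row $C^\sharp B^\sharp A^\sharp=u^\sharp$.

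The main obstacle will be the boundary bookkeeping in the base case. When $A$ is empty, the very last segment of $B^\sharp$ is inserted under the $l(u)-t_0\leq t_c$ branch of Definition~\ref{definition:timed-row-insertion} rather than the strict-inequality branch (the output is still correct, but the case analysis must cover this). Similarly, when $A^\sharp$ begins with $n-c+1$ (i.e., when $c$ genuinely occurs in $u$), the $(n-c+1)^t$ block merges with its successor in any exponential-string representative, so the argument has to be carried out at the level of piecewise-constant functions, as the remark after Definition~\ref{definition:timed-row-insertion} warns. Once these are handled uniformly, the computation is a routine accounting built from the length identity $l(B)=t$.
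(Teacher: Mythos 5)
Your proof is correct and follows essentially the same route as the paper's: reduce to $v=c^t$, write down the three-part decomposition of $u$ around the insertion point, apply $\sharp$, and observe that every segment of $v'^\sharp$ must bump out of the $(n-c+1)^t$ block. The paper phrases the final step as a second induction reducing to the case where $v'$ is a constant segment, while you instead insert $v'^\sharp$ segment-by-segment and track the cumulative bumped output directly, but these are interchangeable bookkeeping strategies for the same underlying calculation; your explicit attention to the $A=\emptyset$ and segment-merging edge cases is sound and, if anything, more careful than the paper's treatment.
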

\begin{proof}
  It suffices to consider the case where $v=c^t$.
  The hypothesis $l(v')=l(v)$ implies that $t_0=\inf\{t\mid u(t)>c\}$ satisfies $0\leq t_0\leq l(u)-c$, and
  \begin{displaymath}
    u'=u_{[0,t_0)}c^tu_{[t_0+t,l(u))}, \text{ and } v'=u_{[t_0,t_0+t)}.
  \end{displaymath}
  Using induction as in the proof of Theorem~\ref{theorem:tableauness-of-insertion}, we may assume that $v'$ is constant, so $v'=d^t$ for some $d>c$.

  Now
  \begin{displaymath}
    {u'}^\sharp=u_{[t_0+t,l(u))}^\sharp (n-c+1)^t u_{[0,t_0)}^\sharp \text{ and } {v'}^\sharp=(n-d+1)^t.
  \end{displaymath}
  Since all the values of $u_{[t_0+t,l(u))}$ are greater than or equal to $d$, all the values of $u_{[t_0+t,l(u))}^\sharp$ are less than or equal to $n-d+1$.
  Moreover, $n-c+1>n-d+1$.
  It follows immediately from Definition~\ref{definition:timed-row-insertion} that $\rowins({u'}^\sharp,{v'}^\sharp)=(v^\sharp,u^\sharp)$.
\end{proof}
\begin{corollary}
  \label{corollary:row-insertion-bijection}
  The timed row insertion algorithm gives rise to a bijection:
  \begin{multline*}
    \rowins: \tr_n(r)\times \tr_n(s) \tilde\to \\\{(v',u')\in \tr_n(r+s-r')\times \tr_n(r')\mid r'\geq \max(r,s),\; u'\lhd v'\}. 
  \end{multline*}
\end{corollary}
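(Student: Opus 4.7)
The forward direction---that $\rowins(u,v)=(v',u')$ lies in the claimed codomain---is essentially assembled from what is already in place. Both components are timed rows (by the remarks following Definition~\ref{definition:timed-row-insertion}), the total length $l(u')+l(v')=r+s$ is preserved, and $u'\lhd v'$ is the special case $x=u$, $y=\emptyset$ of Lemma~\ref{lemma:tableauness-of-insertion}(\ref{item:dom}). For the length bounds, row insertion never shortens its base row, so $l(u')\geq r$; and since the ejected $v'$ is a concatenation of pairwise disjoint segments of $u$, we have $l(v')\leq r$, whence $l(u')=r+s-l(v')\geq s$. Thus $r':=l(u')\geq\max(r,s)$ and $l(v')=r+s-r'$.

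To construct the inverse, given $(v',u')$ with $l(u')=r'$, the plan is to peel off the purely appended portion of $v$ and invert what remains via Lemma~\ref{lemma:reverse-row-insertion}. Split $u'$ at position $r$: set $u_1:=u'|_{[0,r)}$ and $v_2:=u'|_{[r,r')}$, both of which are timed rows. Since $l(v')\leq r$ (using $r'\geq s$), the dominance $u'\lhd v'$ restricts to $u_1\lhd v'$. Then compute $(a,b):=\rowins(u_1^\sharp,(v')^\sharp)$ and declare $u:=b^\sharp$, $v:=a^\sharp v_2$.

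The main obstacle is to show that $\rowins(u_1^\sharp,(v')^\sharp)$ has no overflow, i.e.\ $l(a)=l(v')$ (equivalently $l(b)=r$, which is what makes $l(u)=r$). Write the duration vectors of $u_1$ and $v'$ as $(a_i)_{i=1}^n$ and $(b_i)_{i=1}^n$. The dominance $u_1\lhd v'$ becomes the prefix-inequality system $\sum_{j\leq i}b_j\leq\sum_{j<i}a_j$ for all $i$, which translates under $\sharp$ into tail-sum inequalities for the duration vectors of $u_1^\sharp$ and $(v')^\sharp$. An induction on the letters of $(v')^\sharp$ then shows that at each single-letter step the current row has enough room above the inserted letter to absorb it fully---case~2 of Definition~\ref{definition:timed-row-insertion}, or the equality boundary of case~3---and that the residual tail inequalities propagate to the next step.

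Once no overflow is in hand, Lemma~\ref{lemma:reverse-row-insertion} yields $\rowins(u,a^\sharp)=(v',u_1)$, and the remaining verifications are routine. (i)~$v=a^\sharp v_2$ is a timed row: examining the final single-letter step of $\rowins(u,a^\sharp)$ in the no-overflow regime forces the last value of $a^\sharp$ to be at most $u_1(r^-)$, and $u_1(r^-)\leq v_2(0)$ because $u_1 v_2=u'$ is a row. (ii)~$\rowins\circ\phi=\mathrm{id}$: inserting $a^\sharp$ into $u$ produces $(v',u_1)$, after which $v_2$ is purely appended onto $u_1$ (since $v_2(0)\geq u_1(r^-)$ dominates all of $u_1$), giving $(v',u_1 v_2)=(v',u')$. (iii)~$\phi\circ\rowins=\mathrm{id}$: the forward insertion splits $v$ into a bump-only prefix and a pure-appending suffix, where the intermediate row after the prefix has length exactly $r$ and coincides with $u_1$, so Lemma~\ref{lemma:reverse-row-insertion} recovers the prefix as $a^\sharp$.
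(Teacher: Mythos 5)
Your proposal follows the same strategy as the paper's proof: split $u'$ at position $r$ into $u'_{[0,r)}$ and $u'_{[r,r')}$, apply $\rowins$ to the $\sharp$-conjugates of $u'_{[0,r)}$ and $v'$, and use Lemma~\ref{lemma:reverse-row-insertion} to undo the insertion, recovering $u$ from the second output and $v$ by prepending the $\sharp$ of the first output to $u'_{[r,r')}$. The paper states this recovery recipe without further comment; you fill in the verifications it leaves implicit --- that the image lands in the claimed codomain, that the $\sharp$-conjugated insertion has no overflow so the lemma's hypothesis holds, that the recovered $v$ is a row, and that both compositions are the identity --- which is a genuinely more complete writeup of the same argument.
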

\begin{proof}
  Suppose $(u,v)\in \tr_n(r)\times \tr_n(s)$, and $(v',u')=\rowins(u,v)$.
  Then $(u,v)$ can be recovered from $(v',u')$ (given the prior knowledge of $r$ and $s$) as follows:
  let $(v_1^\sharp, u_1^\sharp)=\rowins({u'_{[0,r)}}^\sharp, {v'}^\sharp)$.
  Then using Lemma~\ref{lemma:reverse-row-insertion}, $u$ and $v$ can be recovered as $u=u_1$, and $v=v_1u'_{[r,r')}$.
\end{proof}
\begin{theorem}[Timed Pieri Rule]
  \label{theorem:pieri}
  The timed insertion algorithm gives rise to a bijection:
  \begin{displaymath}
    \ins: \ttab_n(\lambda)\times \tr_n(r) \tilde\to \coprod_{\begin{smallmatrix}\text{$\lambda$ interleaves $\mu$}\\{l(\lambda)+r = l(\mu)}\end{smallmatrix}} \ttab_n(\mu)
  \end{displaymath}
\end{theorem}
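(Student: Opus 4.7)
The plan is to invoke Theorem~\ref{theorem:tableauness-of-insertion} for well-definedness of the map (so $\ins(w,v)$ already lies in $\ttab_n(\mu)$ for some $\mu$ with $\lambda$ interleaving $\mu$ and $|\mu| = |\lambda| + r$), and to prove bijectivity by induction on $l$, the number of rows of $\lambda$. The base $l = 0$ is the identity $v \mapsto v$ on $\tr_n(r) = \ttab_n((r))$.

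For the inductive step, factor $w = u_l \bar w$ with $\bar w \in \ttab_n(\bar\lambda)$, $\bar\lambda = (\lambda_1, \dotsc, \lambda_{l-1})$. By the insertion algorithm, $\ins(w, v) = v'_l u'_l u'_{l-1} \dotsb u'_1$, where $\ins(\bar w, v) = v'_{l-1} u'_{l-1} \dotsb u'_1$ (with $v'_{l-1}$ as the top row) and $(v'_l, u'_l) = \rowins(u_l, v'_{l-1})$. The inductive hypothesis makes the first stage a bijection $\ttab_n(\bar\lambda) \times \tr_n(r) \to \coprod_{\bar\mu} \ttab_n(\bar\mu)$, and Corollary~\ref{corollary:row-insertion-bijection} makes the second stage a bijection on $(u_l, v'_{l-1})$ at the fixed length $l(u_l) = \lambda_l$. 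The interleaving inequalities of $\lambda$ in $\mu$ split into three groups: those of $\bar\lambda$ in $\bar\mu$ (from the inductive hypothesis); $\mu_l \geq \lambda_l \geq \mu_{l+1}$ (from the length constraints in the Corollary, using $l(u'_l) \geq \max(\lambda_l, l(v'_{l-1}))$ and $l(v'_l) = \lambda_l + l(v'_{l-1}) - l(u'_l)$); and $\lambda_{l-1} \geq \mu_l$ (from the third assertion of Lemma~\ref{lemma:tableauness-of-insertion} applied with $x = u_{l-1}$ and $y = u_l$, which yields $l(u'_l) \leq l(u_{l-1}) = \lambda_{l-1}$).

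The hard part will be the surjectivity direction. Given $w' = u'_{l+1} u'_l \dotsb u'_1 \in \ttab_n(\mu)$ with $\lambda$ interleaving $\mu$, the inverse of Corollary~\ref{corollary:row-insertion-bijection}, applied with parameter $\lambda_l$ to the top two rows $(u'_{l+1}, u'_l)$, recovers a unique pair $(u_l, v'_{l-1})$; the length hypotheses it requires follow from the interleaving chain $\mu_l \geq \lambda_l \geq \mu_{l+1}$. One must then check that $u'_{l-1} \lhd v'_{l-1}$, so that $v'_{l-1} u'_{l-1} \dotsb u'_1$ is a genuine timed tableau in some $\ttab_n(\bar\mu)$ (with $\bar\lambda$ interleaving $\bar\mu$ and $|\bar\mu| = |\bar\lambda| + r$) on which the inductive hypothesis supplies the preimage $(\bar w, v)$. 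This $\lhd$-compatibility is not immediate from the row-insertion bijection alone and is the main obstacle; establishing it requires a local analysis of which segments of the reconstructed $u_l$ come from $u'_l$ versus $v'_l$, combined with the tableau condition $u'_{l-1} \lhd u'_l$ already present in $w'$.
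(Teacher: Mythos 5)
Your approach is essentially the paper's: both invert the insertion by peeling rows off from the top using Corollary~\ref{corollary:row-insertion-bijection}, and your induction on the number of rows of $\lambda$ is just an unrolling of the iterative list of $\rowins^{-1}$ calls the paper writes out explicitly. Your split of the interleaving inequalities into three groups (the $\bar\lambda$--$\bar\mu$ block from induction, $\mu_l\geq\lambda_l\geq\mu_{l+1}$ from the length constraints in the corollary, and $\lambda_{l-1}\geq\mu_l$ from part~(\ref{item:int}) of Lemma~\ref{lemma:tableauness-of-insertion}) is correct and actually somewhat more explicit than the paper, which delegates all of it to Theorem~\ref{theorem:tableauness-of-insertion}.

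The $\lhd$-compatibility you flag in the surjectivity direction is a genuine verification step, and you are right to worry about it; notably, the paper itself applies $\rowins^{-1}_{\lambda_{l-1}}(x_l,u'_{l-1})$ without checking the hypothesis $u'_{l-1}\lhd x_l$ (with $x_l=v'_{l-1}$ in your notation) that its own definition of $\rowins^{-1}$ requires, so the gap is in the paper too, not something your route introduces. It does close, and more cleanly than by the local segment-tracking you anticipate. Since during a single row insertion the inserted segments are never re-ejected, $u'_l$ is a sorted row containing all of $v'_{l-1}$ as a timed subword, hence $u'_l(s)\leq v'_{l-1}(s)$ for every $s<l(v'_{l-1})$ (a sorted word pointwise minorizes any of its subwords, read on an initial interval of matching length). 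Combining this with the tableau condition $u'_{l-1}\lhd u'_l$ on $w'$ gives $u'_{l-1}(s)<v'_{l-1}(s)$ for $s<l(v'_{l-1})$; the length inequality $l(u'_{l-1})=\mu_{l-1}\geq\mu_l\geq\mu_l+\mu_{l+1}-\lambda_l=l(v'_{l-1})$ follows from $\mu_{l-1}\geq\mu_l$ and $\lambda_l\geq\mu_{l+1}$. Hence $u'_{l-1}\lhd v'_{l-1}$, and the inductive call is legitimate. The same observation is needed at every stage of the descent if one unrolls, as the paper does.
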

\begin{proof}
  Let $\lambda=(\lambda_1,\dotsc,\lambda_l)$.
  Let $w\in \ttab_n(\lambda)$ have row decomposition $u_l\dotsb u_1$, and $x\in \tr_n(r)$.
  Suppose that $w'=\ins(w,x)$ has row decomposition $u'_{l+1}\dotsb u'_1$ (with the possibility that $u'_{l+1}=\emptyset$).
  We already know that $\shape(w)$ interleaves $\shape(w)$ (Theorem~\ref{theorem:tableauness-of-insertion}).
  Given timed rows $u'$ and $v'$ such that $u'\lhd v'$, and non-negative real numbers $r$ and $s$ such that $r\leq l(u')$, let $\rowins^{-1}_r(v',u')$ denote the unique pair of rows $(u,v)$ such that $l(u)=r$, $l(v)=s$, and $(v',u')=\rowins(u,v)$ (see Corollary~\ref{corollary:row-insertion-bijection}).
  Then the rows of $w$ can be recovered from $w'$ as follows:
  \begin{align*}
    (x_l, u_l) & = \rowins^{-1}_{\lambda_l}(u'_{l+1},u'_l),\\
    (x_{l-1},u_{l-1}) & = \rowins^{-1}_{\lambda_{l-1}}(x_l,u'_{l-1}),\\
    (x_{l-2},u_{l-2}) & = \rowins^{-1}_{\lambda_{l-2}}(x_{l-1},u'_{l-2}),\\
    &\vdots\\
    (x_1,u_1) & = \rowins^{-1}_{\lambda_1}(x_2,u'_1),
  \end{align*}
  and finally $x$ can be recovered as $x=x_1$.
\end{proof}
\begin{definition}
  [Deletion]
  \label{definition:deletion}
  Let $w'\in \ttab_n(\mu)$ and let $\lambda$ be a real partition that interleaves $\mu$.
  Then we write
  \begin{displaymath}
    \del_\lambda(w')=(v,w) \text{ if and only if } w\in \ttab_n(\lambda) \text{ and }\ins(w,v)=w'.
  \end{displaymath}
  The pair $(v,w)$ is computed from $w'$ and $\lambda$ by the algorithm described in the proof of Theorem~\ref{theorem:pieri}.
\end{definition}
\section{Greene's Theorem}
\label{sec:greene}
\subsection{Greene Invariants for Timed Words}
\label{sec:timed-greene-invar}
\begin{definition}[Greene Invariants for Timed Words]
  \label{definition:timed-Greene-invars}
  Given $w\in A_n^\dagger$, its \emph{$k$th Greene invariant} $a_k(w)$ is defined as the maximum possible sum of lengths of a set of $k$ pairwise disjoint subwords of $w$ (see Definition~\ref{definition:timed-subword}) that are all timed rows:
  \begin{multline*}
    a_k(w) = \sup\{l(u_1)+\dotsb+l(u_k)\mid u_1,\dotsc,u_k \text{ are pairwise disjoint subwords,}\\ \text{and each $u_i$ is a timed row}\}
  \end{multline*}
\end{definition}
\begin{lemma}
  \label{lemma:tableau-shape-greene}
  If $w$ is a timed tableau of shape $\lambda=(\lambda_1,\dotsc,\lambda_l)$, then for each $1\leq k\leq l$,
  \begin{displaymath}
    a_k(w) = \lambda_1+\dotsb + \lambda_k.
  \end{displaymath}
\end{lemma}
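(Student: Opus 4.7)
The plan is to prove the two inequalities $a_k(w)\ge\lambda_1+\cdots+\lambda_k$ and $a_k(w)\le\lambda_1+\cdots+\lambda_k$ separately. The lower bound is immediate: the $k$ longest rows $u_1,u_2,\ldots,u_k$ of the row decomposition are pairwise disjoint timed subwords of $w$ (each occupies its own interval of positions within the concatenation $u_lu_{l-1}\cdots u_1$), each is itself a timed row of length $\lambda_i=l(u_i)$, so they form an admissible family of total length $\lambda_1+\cdots+\lambda_k$.

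For the upper bound, let $v_1,\ldots,v_k$ be pairwise disjoint timed row subwords of $w$ with defining sets $S_1,\ldots,S_k\subset[0,l(w))$. For each row index $i$, let $I_i\subset[0,l(w))$ denote the interval of positions occupied by $u_i$ in $w$, and let $T_i^a\subset[0,\lambda_i)$ be the set of column positions $t$ such that the point of $I_i$ lying at column position $t$ belongs to $S_a$. Then $l(v_a)=\sum_i\mu(T_i^a)$, and the total to bound is $\sum_a l(v_a)=\int_0^{\lambda_1}N(t)\,dt$, where $N(t)=\sum_a|\{i:t\in T_i^a\}|$ and $\mu$ is Lebesgue measure.

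The heart of the argument is that for each fixed $a$, the sets $\{T_i^a\}_i$ are pairwise essentially disjoint. Indeed, if $i_1>i_2$ and $U=T_{i_1}^a\cap T_{i_2}^a$ had positive measure, then row $u_{i_1}$ precedes $u_{i_2}$ entirely in reading order, so the contribution of row $i_1$ to $v_a$ (a gap-compression of $u_{i_1}|_{T_{i_1}^a}$) comes entirely before that of row $i_2$. Weak monotonicity of $v_a$ then forces $\mathrm{ess\,sup}_{T_{i_1}^a}u_{i_1}\le\mathrm{ess\,inf}_{T_{i_2}^a}u_{i_2}$, so in particular $u_{i_1}(t)\le u_{i_2}(t)$ for almost every $t\in U$, contradicting the strict dominance $u_{i_2}(t)<u_{i_1}(t)$ on $[0,\lambda_{i_1})\supset U$ that follows from the chain $u_{i_2}\lhd\cdots\lhd u_{i_1}$.

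Granted this claim, $|\{i:t\in T_i^a\}|\le 1$ almost everywhere, so $N(t)\le k$; disjointness of the $S_a$ also yields $N(t)\le h(t):=|\{i:\lambda_i>t\}|$ almost everywhere. Therefore $\sum_a l(v_a)\le\int_0^{\lambda_1}\min(k,h(t))\,dt=\lambda_1+\cdots+\lambda_k$, the last equality being a direct piecewise computation on the intervals $[\lambda_{j+1},\lambda_j)$. The only genuine obstacle is the essential-disjointness claim, which is a straightforward measure-theoretic consequence of dominance once the reading-order decomposition is set up carefully and one passes to a common refinement of the piecewise-constant pieces of $u_{i_1}$ and $u_{i_2}$.
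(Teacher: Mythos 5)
Your proof is correct and takes essentially the same approach as the paper: the key observation is identical, namely that a single row subword of $w$ cannot have column\-/overlapping segments from two different rows $u_{i_1}$ and $u_{i_2}$, because for $i_1>i_2$ the row $u_{i_1}$ has strictly larger entries (by dominance) yet precedes $u_{i_2}$ in reading order, which would violate weak monotonicity. You render the concluding counting step more explicitly than the paper does, via the measure\-/theoretic bound $N(t)\le\min(k,h(t))$ and the integral $\int_0^{\lambda_1}\min(k,h(t))\,dt=\lambda_1+\cdots+\lambda_k$, but this is just a careful unpacking of the paper's terse final sentence rather than a different argument.
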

\begin{proof}
  This proof is very similar to the proof of the corresponding result for ordinary tableaux \cite{Greene-schen,Lascoux}.
  Suppose $w$ has row decomposition $u_lu_{l-1}\dotsb u_1$.
  Then $u_1,\dotsc,u_k$ are pairwise disjoint subwords that are rows, so
  \begin{displaymath}
    a_k(w) \geq \lambda_1+\dotsb + \lambda_k.
  \end{displaymath}
  Conversely, any row subword of $w$ cannot have overlapping segments from two different rows $u_i$ and $u_j$ of $w$, because if $i>j$, then $u_i(t)>u_j(t)$, but in the row decomposition of $w$, $u_i$ occurs before $u_j$.
  Therefore, $k$ disjoint subwords can have length at most the sum of lengths of the largest $k$ rows of $w$, which is $\lambda_1+\dotsc+\lambda_k$.
\end{proof}
\subsection{Timed Knuth Equivalence and the Timed Plactic Monoid}
\begin{definition}
  [Timed Knuth Relations]
  \label{sec:timed-knuth-equiv}
  Assume that $x$, $y$ and $z$ are timed rows such that $xyz$ is also a timed row.
  The timed Knuth relations are given by:
  \begin{align}
    \tag{$\kappa_1$}
    \label{eq:tk1}
    xzy & \equiv zxy \text{ if } l(z)=l(y) \text{ and } \lim_{t\to l(y)^-} y(t)<z(0),\\
    \tag{$\kappa_2$}
    \label{eq:tk2}
    yxz & \equiv yzx\text{ if } l(x)=l(y) \text{ and } \lim_{t\to l(x)^-} x(t)<y(0).
  \end{align}
\end{definition}
\begin{definition}
  [Timed Plactic Monoid]
  \label{definition:timed-plactic-monoid}
  The \emph{timed plactic monoid} $\pl^\dagger(A_n)$ is the quotient $A_n^\dagger/\equiv$, where $\equiv$ is the congruence generated by the timed Knuth relations (\ref{eq:tk1}) and (\ref{eq:tk2}).
\end{definition}
In other words, two elements of $A_n^\dagger$ are said to \emph{differ by a Knuth relation} if they are of the form $uv_1w$ and $uv_2w$, where $v_1$ and $v_2$ are terms on opposite sides of one of the timed Knuth relations (\ref{eq:tk1}) and (\ref{eq:tk2}).
Knuth equivalence $\equiv$ is the equivalence relation generated by Knuth relations.
Since this equivalence is stable under left and right multiplication in $A_n^\dagger$, the concatenation product on $A_n^\dagger$ descends to a product on the set $\pl^\dagger(A_n)$ of Knuth equivalence classes, giving it the structure of a monoid.
\begin{lemma}
  \label{lemma:sharp-moves}
  Two timed words $v$ and $w$ differ by a Knuth relation (\ref{eq:tk1}) if and only if $v^\sharp$ and $w^\sharp$ (see Definition~\ref{definition:schuetzenberger-involution}) differ by a Knuth relation (\ref{eq:tk2}).
\end{lemma}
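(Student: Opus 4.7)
The approach is to exploit the fact that the Schützenberger involution $\sharp$ is an anti-automorphism of the monoid $A_n^\dagger$, i.e.\ $(w_1 w_2)^\sharp = w_2^\sharp w_1^\sharp$ and $(w^\sharp)^\sharp = w$. From this, if $v = u v_1 w'$ and $w = u v_2 w'$ differ by a local Knuth move at the middle segment, then $v^\sharp = {w'}^\sharp v_1^\sharp u^\sharp$ and $w^\sharp = {w'}^\sharp v_2^\sharp u^\sharp$ still differ only in the middle segment. Thus it suffices to verify that $\sharp$ converts an instance of $(\kappa_1)$ into an instance of $(\kappa_2)$; the converse then follows by applying $\sharp$ a second time.

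Before handling the local move itself, I would record two small facts. First, $\sharp$ sends timed rows to timed rows: if $w$ is weakly increasing, then $w^\sharp$, which reverses both positional order and the order on the alphabet, is also weakly increasing. Second, if $w = c_1^{t_1}\dotsb c_k^{t_k}$ in (merged) exponential form, then $w(0) = c_1$ and $\lim_{t\to l(w)^-} w(t) = c_k$, while $w^\sharp(0) = n-c_k+1$ and $\lim_{t\to l(w^\sharp)^-} w^\sharp(t) = n-c_1+1$. So $\sharp$ swaps the first value and the terminal left-limit, sending the value $c$ to $n-c+1$.

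For the main computation, take timed rows $x,y,z$ with $xyz$ a timed row satisfying the hypotheses of $(\kappa_1)$: $l(z)=l(y)$ and $\lim_{t\to l(y)^-} y(t) < z(0)$. Applying $\sharp$ to the two sides of $(\kappa_1)$ yields $(xzy)^\sharp = y^\sharp z^\sharp x^\sharp$ and $(zxy)^\sharp = y^\sharp x^\sharp z^\sharp$. Renaming $Y = y^\sharp$, $X = z^\sharp$, $Z = x^\sharp$, this is exactly $YXZ \equiv YZX$, which is the shape of $(\kappa_2)$. I would then verify the side conditions: $l(X)=l(z^\sharp)=l(z)=l(y)=l(Y)$, and by the second fact above, $\lim_{t\to l(X)^-} X(t) = n-z(0)+1$ and $Y(0) = n-\lim_{t\to l(y)^-}y(t)+1$, so the strict inequality in $(\kappa_2)$ becomes $\lim_{t\to l(y)^-}y(t) < z(0)$, precisely the $(\kappa_1)$ hypothesis. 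Finally, $XYZ = z^\sharp y^\sharp x^\sharp = (xyz)^\sharp$ is a timed row because $xyz$ is and $\sharp$ preserves rows.

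No step really functions as an obstacle; the only place requiring care is correctly tracking which endpoint of a row gets swapped with which under $\sharp$, since the inequality in the Knuth relations involves a left limit at one end and a value at the other. Once that bookkeeping is clear, the proof is a direct transcription. Applying $\sharp$ again to the equivalence $v^\sharp \equiv w^\sharp$ produced above, using involutivity, gives the other direction and completes the lemma.
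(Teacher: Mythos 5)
Your proof is correct and follows exactly the approach of the paper, whose proof is the single sentence that applying the involution $\sharp$ to relation $(\kappa_1)$ produces relation $(\kappa_2)$; you have simply written out the anti-automorphism bookkeeping, the renaming $Y=y^\sharp$, $X=z^\sharp$, $Z=x^\sharp$, and the verification that the two side conditions ($l(z)=l(y)$ and the strict inequality between the terminal left-limit and initial value) are carried to the corresponding conditions of $(\kappa_2)$, which the paper leaves implicit.
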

\begin{proof}
  When the involution $w\mapsto w^\sharp$ is applied to the Knuth relation (\ref{eq:tk1}), the Knuth relation (\ref{eq:tk2}) is obtained.
\end{proof}
\begin{lemma}
  \label{lemma:reduction-to-tab}
  Every timed word is Knuth equivalent to its timed insertion tableau.
\end{lemma}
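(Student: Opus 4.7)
My plan is to prove the lemma by induction on the number $l$ of rows in the row decomposition of $w$. The base case $l=1$ is immediate: $w$ is itself a timed row, hence equals its own insertion tableau. For the inductive step, split $w=w^{-}v$, where $v=u_l$ is the last row and $w^{-}=u_1\cdots u_{l-1}$. The induction hypothesis gives $w^{-}\equiv P(w^{-})$, and two-sided stability of $\equiv$ yields $w\equiv P(w^{-})v$. Since $P(w)=\ins(P(w^{-}),v)$ by Definition~\ref{definition:insertion_tableaux}, the problem reduces to the following \emph{key lemma}: for every timed tableau $T$ and timed row $v$, $Tv\equiv\ins(T,v)$.

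I would prove the key lemma by a second induction on the number $k$ of rows of $T$, writing the row decomposition as $u_k\cdots u_1$ with $u_1$ the bottom row. The base case $k=1$ is the single-row \emph{sub-lemma}: for timed rows $u,v$ with $(v',u')=\rowins(u,v)$, one has $uv\equiv v'u'$. For $k\geq 2$, the first step of insertion produces $(v_1',u_1')=\rowins(u_1,v)$, and the sub-lemma gives $u_1 v\equiv v_1'u_1'$; hence $Tv\equiv u_k\cdots u_2 v_1'u_1'$, and applying the induction hypothesis to the $(k-1)$-row tableau $u_k\cdots u_2$ with row $v_1'$ yields $Tv\equiv v_k'u_k'\cdots u_1'=\ins(T,v)$. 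A further induction on the exponential decomposition $v=c_1^{t_1}\cdots c_m^{t_m}$, using the recursive structure of $\rowins$ from Definition~\ref{definition:timed-row-insertion}, reduces the sub-lemma itself to the constant case $v=c^t$.

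For the constant case I would decompose $u=abd$, where $a=u_{[0,t_0)}$ is the initial part on which $u\leq c$ (with $t_0$ the first time $u$ exceeds $c$), $b$ is the bumped segment of length $\min(t,\,l(u)-t_0)$ starting at $t_0$, and $d$ is the remaining tail. In the main sub-case $l(u)-t_0>t$ (so $l(b)=t$, $v'=b$, $u'=ac^td$), the goal is $abdc^t\equiv bac^td$, which I would produce by two Knuth moves: first apply~\eqref{eq:tk2} to the subword $bdc^t$ with $(y,z,x)=(b,d,c^t)$---the required length equality $l(x)=l(y)=t$, the strict separation (last value of $c^t$ is $c$, while $b(0)>c$), and the row-ness of $c^tbd$ all hold by construction---yielding $abdc^t\equiv abc^td$; then apply~\eqref{eq:tk1} to the subword $abc^t$ with $(x,z,y)=(a,b,c^t)$, where the same three conditions hold because $a$ has values $\leq c$, yielding $abc^td\equiv bac^td$. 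In the boundary sub-case $l(u)-t_0\leq t$, the tail $d$ is empty and a single application of~\eqref{eq:tk1} with $y=c^{l(b)}$ suffices; the case when $u$ never exceeds $c$ is trivial.

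The chief obstacle I anticipate is the careful verification of the three hypotheses of each Knuth relation (equal lengths of the swapped blocks, strict separation of consecutive values, and row-ness of $xyz$) at every invocation---these force the decomposition $u=abd$ with $l(b)$ exactly $t$, as no weaker or differently-shaped relations would close the argument. This aligns with the paper's own remark in the introduction that the timed Knuth relations had to be tuned to be simultaneously weak enough to preserve Greene invariants (Lemma~\ref{lemma:Knuth-Greene}) and strong enough to achieve this very reduction.
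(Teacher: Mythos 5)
Your proof is correct and takes essentially the same approach as the paper's: both reduce the statement to the single-row claim $uv\equiv v'u'$ for $(v',u')=\rowins(u,v)$ (the paper by unwinding Definition~\ref{definition:insertion_tableaux} and your plan by the explicit double induction on $l$ and $k$, which amounts to the same chaining), then both reduce to $v=c^t$ by inserting in stages, decompose $u$ into a prefix $a$ (your $a$, the paper's $x'$), a bumped block $b$ of length $\min(t,\,l(u)-t_0)$ (the paper's $y$), and a tail $d$ (the paper's $x''$), and execute the identical $\kappa_2$-then-$\kappa_1$ pair of moves on $abdc^{t_1}$ to reach $bac^{t_1}d$. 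Your explicit handling of the boundary sub-case $l(u)-t_0\le t$ by peeling off $c^{l(b)}$ is exactly what the paper's choice $t_1=\min(l(u)-t_0,t)$ accomplishes implicitly.
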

\begin{proof}
  First we show that, for timed rows $u$ and $v$, if $(v',u')=\rowins(v,u)$, then $v'u'\equiv uv$.
  By insertion in stages, we assume that $v=c^t$.
  If $u(t)\leq c$ for all $0\leq t<l(u)$, there is nothing to show.
  Otherwise, a segment $y$ of $u$, beginning at $t_0$, and of length $t_1=\min(l(u)-t_0,t)$ is displaced by the segment $c^{t_1}$ of $c^t$.
  Write $u=x'yx''$.
  It suffices to show $x'yx''c^{t_1}\equiv yx'c^{t_1}x''$.
  But this can be done in two steps as follows (the segment to which the Knuth relation is applied is underlined):
  \begin{displaymath}
    x'\underline{y x'' c^{t_1}} \equiv_{\kappa_2} x'\underline{yc^{t_1}x''} = \underline{x'yc^{t_1}}x'' \equiv_{\kappa_1} \underline{yx'c^{t_1}}x''.
  \end{displaymath}
  From Definition~\ref{definition:insertion_tableaux}, it suffices to show that $\ins(w,v)\equiv wv$ for every timed tableau $w$ and every timed row $v$.
  Suppose $w$ has row decomposition $u_lu_{l-1}\dotsb u_1$ then, with the notations of Definition~\ref{definition:timed-tableau-insertion},
  \begin{align*}
    wv &= u_l\dotsb u_2u_1 v\\
       &\equiv u_l \dotsb u_2v_1'u_1'\\
       &\equiv u_l \dotsb v_2'u_2'u_1'\\
       & \vdots\\
       &\equiv v_l'u_l'\dotsb u_2'u_1' = \ins((w,v),
  \end{align*}
  by repeated application of the assertion at the beginning of this proof.
\end{proof}
\subsection{Knuth Equivalence and Greene Invariants}
\label{sec:knuth-equiv-green}
\begin{lemma}
  \label{lemma:Knuth-Greene}
  If two timed words are Knuth equivalent, then they have the same Greene invariants.
\end{lemma}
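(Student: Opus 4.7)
The plan is to establish $a_k$-invariance under a single Knuth move and then iterate. Observe first that the Sch\"utzenberger involution $w\mapsto w^\sharp$ simply reverses both the alphabet and positional orders, so it sends row-subwords to row-subwords of the same length and preserves pairwise disjointness; hence $a_k(w)=a_k(w^\sharp)$ for every timed word $w$ and every $k$. Combined with Lemma~\ref{lemma:sharp-moves}, this means it suffices to prove $a_k$-invariance under $(\kappa_1)$.

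Fix timed rows $x,y,z$ satisfying the hypotheses of $(\kappa_1)$ and arbitrary timed words $\alpha,\beta$; set $w=\alpha xzy\beta$ and $w'=\alpha zxy\beta$. Since $(\kappa_1)$ is self-inverse it suffices to show $a_k(w)\le a_k(w')$. Take a family $R_1,\dots,R_k$ of pairwise disjoint timed row-subwords of $w$ realising $a_k(w)$, and decompose each as $R_j=R_j^\alpha R_j^x R_j^z R_j^y R_j^\beta$ according to which segment each portion lies in.

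The key structural observation is that the hypothesis $\lim_{t\to l(y)^-}y(t)<z(0)$, together with $xyz$ being a row, forces the values to obey $x\le y<z$; whence every row-subword of the middle block $xzy$ is of one of two types: Type~A (uses some portion of $z$ and none of $y$, form $(x\text{-piece})(z\text{-piece})$) or Type~B (uses some portion of $y$ and none of $z$, form $(x\text{-piece})(y\text{-piece})$). Dually, every row-subword of the middle block $zxy$ is either a standalone $z$-piece or has the form $(x\text{-piece})(y\text{-piece})$. We build $R'_1,\dots,R'_k$ in $w'$ by embedding Type~B rows unchanged and by replacing each Type~A row either (i) by a pure-$z$ row carrying the piece $R_j^z$, or (ii) by an $(x,y)$-row carrying $R_j^x$ followed by a fresh $y$-segment of length $l(R_j^z)$; option (ii) is length-preserving precisely because $l(y)=l(z)$.

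The main obstacle is to choose the conversions and the fresh $y$-segments consistently: the $y$-segments introduced for Type~A-to-(ii) conversions must fit disjointly inside $y$ alongside the preserved Type~B $y$-pieces, and each new row must remain compatible with its $\alpha$- and $\beta$-contexts. An elementary capacity argument, leveraging $l(y)=l(z)$ together with the disjointness of the $R_j^z$'s in $z$, shows that whenever the total $y$-budget would be exceeded by the na\"ive assignment, one can always reassign an offending $x$-piece from a Type~A row to an $(x,y)$-row without loss of total length; and in the one situation where such a reassignment is obstructed by context---a Type~A row whose $x$-piece is already empty and whose $\alpha$-tail takes a value exceeding $\lim_{t\to l(y)^-}y(t)$---the pure-$z$ conversion (i) is automatically lossless. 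Carrying out this bookkeeping is the main technical step, and parallels the case analysis in the classical proof of Knuth-invariance of Greene's invariants.
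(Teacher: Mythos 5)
Your high-level plan --- reduce to $(\kappa_1)$ via the Sch\"utzenberger involution, observe the easy containment of row subwords, then classify row subwords of the middle block into pure-$z$ and $(x,y)$ types --- is the same skeleton as the paper's proof. But the proposal leaves a genuine gap exactly at the step that carries all the difficulty. You describe replacing each Type~A row by either a pure-$z$ row (possibly losing its $x$-piece) or an $(x,y)$-row obtained by grafting in a fresh $y$-segment of length $l(R_j^z)$, and then invoke an unspecified ``capacity argument'' plus ``reassignment of offending $x$-pieces'' to repair any deficit. That is not a proof. Reassigning an $x$-piece $R_j^x$ onto another row's $(x,y)$-block requires that the receiving row's $\alpha$-context remain compatible with the merged $x$-piece: the leftmost value of the combined $x$-piece may be strictly smaller than what that row's $\alpha$-piece tolerated, so the resulting word may fail to be a row. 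Similarly, the fresh $y$-segments for the retained Type~A rows with empty $x$-pieces must lie not only disjointly in $y$ but also in the correct value range relative to each row's $\alpha$-context, and you have not shown such choices can always be made simultaneously. You also do not rule out the case that more than one Type~A row with a nonempty $x$-piece must be downgraded; your remark about ``the one situation'' handles only the trivial case $R_j^x=\emptyset$. These compatibility constraints are precisely what make the lemma delicate; saying the bookkeeping ``parallels the classical case analysis'' is not a substitute for carrying it out.

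Two further remarks. First, your opening reduction ``since $(\kappa_1)$ is self-inverse it suffices to show $a_k(w)\le a_k(w')$'' is not a valid justification of the easy direction: the decomposition $w'=\alpha zxy\beta$ is not itself an instance of the $(\kappa_1)$ pattern (the word $zyx$ is not a row), so you cannot obtain $a_k(w')\le a_k(w)$ by applying the same claim to the pair $(w',w)$. The correct and elementary argument is that, since all values of $z$ strictly exceed all values of $x$ and of $y$, every row subword of $\alpha zxy\beta$ already appears as a row subword of $\alpha xzy\beta$, giving $a_k(w')\le a_k(w)$ directly. Second, the paper's construction avoids the $y$-budget problem entirely and is genuinely different from what you sketch: when no row uses a $y$-piece, it hands the \emph{entire} row $y$ to one chosen Type~A row and simply discards all other $z$-pieces, which is lossless since $l(y)=l(z)\geq\sum l(z_i)$; when some Type~B row $v_{r+1}$ does use a $y$-piece, it consolidates \emph{all} the $z$-pieces into a single new pure-$z$ row, merges $x_1$ with $x_{r+1}$ to pair with $y_{r+1}$, and performs a swap of the $w$- and $u$-contexts controlled by comparing $\min x_1$ with $\min x_{r+1}$. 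No fresh $y$-segments are ever introduced, so the disjointness and compatibility checks close up cleanly. Your proposal would need to supply an analogous explicit construction and verify it; as written it does not.
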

\begin{proof}
  It suffices to prove that if two words differ by a Knuth relation they have the same Greene invariants.
  For the Knuth relation (\ref{eq:tk1}), suppose that $xyz$ is a timed row with $l(z)=l(y)$, and the last letter of $y$ is strictly less than the first letter of $z$.
  For any timed words $w$ and $u$, we wish to show that Greene invariants coincide for $wxzyu$ and $wzxyu$.
  Since every timed row subword of $wzxyu$ is also a timed row subword of $wxzyu$, $a_k(wzxyu)\leq a_k(wxzyu)$ for all $k$.
  
  To prove the reverse inequality, for any set of pairwise disjoint row subwords $v_1,\dotsc,v_k$ of $wxzyu$, it suffices to construct pairwise disjoint row subwords $v_1',\dotsc,v_k'$ of $wzxyu$ such that $\sum_{i=1}^k l(v'_k)\geq \sum_{i=1}^k l(v_k)$.
  Write $v_i=w_ix_iz_iy_iu_i$ for each $i$, where $w_i,x_i,z_i,y_i$ and $u_i$ are (possibly empty) row subwords of $w,x,z,y$ and $u$ respectively.

  Since the last letter of $y$ is strictly smaller than the first letter of $z$, it cannot be that $y_i\neq \emptyset$ and $z_i\neq \emptyset$ simultaneously for the same $i$.
  If, for all $i$, $x_i=\emptyset$, or $z_i=\emptyset$, then each $v_i$ remains a row subword of $wzxyu$, so we may take $v'_i=v_i$ for all $i$.

  Otherwise, there exists $i$ such that $v_i=w_ix_iz_iu_i$, with $x_i\neq \emptyset$, and $z_i\neq \emptyset$.
  Without loss of generality, assume that this is the case for $i=1,\dotsc,r$, and not for $i=r+1,\dotsc,k$ for some $1\leq r\leq k$.
  If $y_i=\emptyset$ for all $i$, then set
  \begin{displaymath}
    v'_i =
    \begin{cases}
      w_1x_1yu_1&\text{for }i=1,\\
      w_ix_iu_i&\text{for }1<i\leq r,\\
      v_i&\text{for }r<i\leq k.
    \end{cases}
  \end{displaymath}
  Since $l(y)=l(z)\geq \sum_{i=1}^r l(z_i)$, $\sum_{i=1}^k l(v'_i)\geq \sum_{i=1}^k l(v_i)$.
  By construction the words $v'_1,\dotsc,v'_k$ are pairwise disjoint timed row subwords of $w$.

  Finally, suppose there exists at least one index $i$ such that $y_i\neq \emptyset$, say $y_{r+1}\neq \emptyset$.
  Also, assume that $\max z_1$ (the largest letter of $z_1$) is at least as large as $\max z_i$ for $i=2,\dotsc,r$.
  Let $z_0$ be the timed row obtained by concatenating all the segments of $z_i$ for $1\leq i\leq r$ in the order in which they occur in $z$.
  It follows that $\max z_0 = \max z_1$, and $l(z_0)=l(z_1)+\dotsb+l(z_r)$.
  Let $x_0$ be the timed row obtained by concatenating all the segments of $x_1$ and $x_{r+1}$ in the order in which they occur in $x$.
  If $\min x_1\leq \min x_{r+1}$, define
  \begin{displaymath}
    v'_i = 
    \begin{cases}
      w_1x_0y_{r+1}u_{r+1} & \text{for }i=1,\\
      w_ix_iu_i & \text{for }1<i\leq r,\\
      w_{r+1}z_0u_1 &\text{for }i=r+1,\\
      v_i&\text{for }r+1<i\leq k.
    \end{cases}
  \end{displaymath}
  If $\min x_1>\min x_{r+1}$, then interchange $w_1$ and $w_{r+1}$ in the above definition.
  In both cases the words $v'_1,\dotsc, v'_k$ are pairwise disjoint row subwords of $wzxyu$ whose lengths add up to $l$.

  For the Knuth relation (\ref{eq:tk2}), a similar argument can be given.
  However, a more elegant method is to use Lemma~\ref{lemma:sharp-moves}, noting that $a_k(w)=a_k(w^\sharp)$ for all $k\geq 1$ and all $w\in A_n^\dagger$, thereby reducing it to (\ref{eq:tk1}).
\end{proof}\pagebreak
\subsection{The timed version of Greene' theorem}
\label{sec:timed-version-greene}
\begin{theorem}
  [Timed version of Greene's theorem]
  \label{theorem:timed-version-greene}
  For every $w\in A_n^\dagger$, if the timed tableau $P(w)$ has shape $\lambda=(\lambda_1,\dotsc,\lambda_l)$, then
  \begin{displaymath}
    a_k(w) = \lambda_1+\dotsb+\lambda_k \text{ for $k=1,\dotsc,l$}.
  \end{displaymath}
\end{theorem}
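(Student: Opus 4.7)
The plan is to assemble the result from the three lemmas already established in the preceding subsections, which together reduce the theorem to a one-line chain of equalities.

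First I would invoke Lemma~\ref{lemma:reduction-to-tab} to pass from the arbitrary timed word $w$ to its insertion tableau $P(w)$, obtaining $w \equiv P(w)$ in the timed plactic monoid $\pl^\dagger(A_n)$. This is the step that does all of the real combinatorial work, but it is already in hand: the row-by-row argument via the $\kappa_1,\kappa_2$ sandwich $x' y x'' c^{t_1} \equiv_{\kappa_2} x' y c^{t_1} x'' \equiv_{\kappa_1} y x' c^{t_1} x''$ transports each row insertion step through a Knuth equivalence, and induction on the row decomposition handles the full $\ins$ operation.

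Next I would apply Lemma~\ref{lemma:Knuth-Greene} to deduce that $a_k(w) = a_k(P(w))$ for every $k \geq 1$, since Knuth equivalence preserves all Greene invariants. Finally, because $P(w)$ is by construction a timed tableau of shape $\lambda = (\lambda_1,\dotsc,\lambda_l)$, Lemma~\ref{lemma:tableau-shape-greene} gives $a_k(P(w)) = \lambda_1 + \dotsb + \lambda_k$ for $1 \leq k \leq l$. Chaining the two equalities yields the desired formula.

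There is no real obstacle at this stage; the delicate parts of the argument were already absorbed into Lemmas~\ref{lemma:Knuth-Greene} and \ref{lemma:reduction-to-tab} (the former requiring the careful case analysis on whether the displaced segments $x_i, z_i, y_i$ are empty). The only thing to be slightly careful about is to note that the claim is only asserted for $k \leq l$, so one does not need to say anything about $a_k(w)$ for $k > l$; for those values the shape of $P(w)$ simply has no further parts, and the formula would read $\lambda_1 + \dotsb + \lambda_l$ by the same chain, but this is outside the scope of the statement. Thus the proof reduces to a short three-step citation of the preceding lemmas.
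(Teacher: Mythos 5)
Your proposal is correct and follows exactly the same route as the paper: cite Lemma~\ref{lemma:reduction-to-tab} for $w\equiv P(w)$, Lemma~\ref{lemma:Knuth-Greene} for invariance of $a_k$ under Knuth equivalence, and Lemma~\ref{lemma:tableau-shape-greene} to evaluate $a_k$ on the tableau $P(w)$. The paper's proof is the same three-lemma chain, stated just as briefly.
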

\begin{proof}
  Greene's theorem holds when $w$ is a timed tableau (Lemma~\ref{lemma:tableau-shape-greene}).
  By Lemma~\ref{lemma:Knuth-Greene}, Greene invariants remain unchanged under the timed versions of Knuth relations.
  By Lemma~\ref{lemma:reduction-to-tab}, every timed word is Knuth equivalent to its timed insertion tableau.
  Therefore, the Greene invariants of a timed word are given by the shape of its insertion tableau as stated in the theorem.
\end{proof}
\begin{remark}
  The proof of Lemma~\ref{lemma:tableau-shape-greene} shows that the supremum in the definition of Greene invariants (Definition~\ref{definition:timed-Greene-invars}) is attained for timed tableaux.
  From the proof of Lemma~\ref{lemma:Knuth-Greene}, it follows that this supremum is attained for every timed word in the Knuth equivalence class of a timed tableau, and therefore for every $w\in A_n^\dagger$.
\end{remark}
\section{Knuth Equivalence Classes}
\label{sec:knuth-classes}
\subsection{Tableaux in Knuth Equivalence Classes}
\label{sec:tabl-knuth-equiv}
Given $w\in A_n^\dagger$, let $\bar w$ denote the word in $A_{n-1}^\dagger$ whose exponential string is obtained by removing all terms of the form $n^t$ with $t>0$ from the exponential string of $w$.
The word $\bar w$ is called the restriction of $w$ to $A_{n-1}$.
\begin{lemma}
  \label{lemma:restriction-interleaf}
  For every timed tableau $w\in A_n^\dagger$, $\bar w$ is also a timed tableau. Moreover, $\shape(\bar w)$ interleaves $\shape(w)$.
\end{lemma}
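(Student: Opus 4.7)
The plan is to describe $\bar w$ in terms of the row decomposition of $w$. Write $w = u_l u_{l-1}\dotsb u_1$ with $u_1 \lhd \dotsb \lhd u_l$, and, for each $i$, let $\bar u_i$ denote the timed row in $A_{n-1}$ obtained from $u_i$ by deleting its trailing $n$-block (which might be empty). Then $\bar w = \bar u_l \bar u_{l-1} \dotsb \bar u_1$, and $l(\bar u_i)$ is the position in $u_i$ at which the $n$-block begins. I aim to show that, after discarding $\bar u_l$ if it happens to be empty, this expression is already the row decomposition of $\bar w$ and satisfies the domination inequalities required for a timed tableau.

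The driving observation comes from $u_i \lhd u_{i+1}$: the inequality $u_i(t) < u_{i+1}(t) \leq n$ for $t < l(u_{i+1})$ forces $u_i(t) \leq n-1$ on the entire interval $[0, l(u_{i+1}))$, so the $n$-block of $u_i$ begins no earlier than position $l(u_{i+1})$. This yields $l(\bar u_i) \geq l(u_{i+1}) \geq l(\bar u_{i+1})$. Since $\bar u_i$ agrees with $u_i$ on $[0, l(\bar u_i))$, and likewise for $\bar u_{i+1}$, the strict pointwise inequality $\bar u_i(t) < \bar u_{i+1}(t)$ on $[0, l(\bar u_{i+1}))$ is automatic, giving $\bar u_i \lhd \bar u_{i+1}$ whenever $\bar u_{i+1}$ is nonempty. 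The same reasoning rules out $\bar u_i$ being empty for any $i < l$, since emptiness would force $u_i$ to take the constant value $n$, contradicting $u_i(t) < u_{i+1}(t) \leq n$.

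The remaining, and I expect subtlest, step is to verify that the row decomposition of $\bar w$ does not accidentally merge consecutive $\bar u_{i+1}$ and $\bar u_i$, which could in principle disturb the domination chain. Specializing $u_i \lhd u_{i+1}$ at $t = 0$ gives $\bar u_{i+1}(0) = u_{i+1}(0) > u_i(0) = \bar u_i(0)$; since the last letter of the timed row $\bar u_{i+1}$ is at least $\bar u_{i+1}(0)$, it is strictly greater than $\bar u_i(0)$, producing a descent at the junction. Hence $\bar u_{i+1} \bar u_i$ is not a timed row, and $\bar u_l \dotsb \bar u_1$ (with $\bar u_l$ deleted if empty) is the genuine row decomposition of $\bar w$. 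The dominations already established then make $\bar w$ a timed tableau, and the sandwich $l(u_i) \geq l(\bar u_i) \geq l(u_{i+1})$, supplemented by $l(u_l) \geq l(\bar u_l) \geq 0$, yields the interleaving of $\shape(\bar w)$ with $\shape(w)$.
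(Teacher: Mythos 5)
Your proof is correct and follows essentially the same approach as the paper: strip the trailing $n$-block from each row, use $u_i\lhd u_{i+1}$ together with $u_{i+1}(t)\le n$ to show the $n$-block of $u_i$ starts no earlier than $l(u_{i+1})$, and deduce both the interleaving and the domination chain for the truncated rows. You are somewhat more careful than the paper's terse version in explicitly verifying (via the descent $\bar u_{i+1}(0) > \bar u_i(0)$) that $\bar u_l \dotsb \bar u_1$ really is the row decomposition of $\bar w$ and not accidentally coarser, a point the paper leaves implicit.
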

\begin{proof}
  Suppose $w$ has row decomposition $u_lu_{l-1}\dotsb u_1$.
  Since $n$ is the largest element of $A_n$, we may write $u_i=u'_in^{t_i}$ for some $t_i\geq 0$.
  Clearly $l(u_i)\geq l(u'_i)$.
  Since $w$ is semistandard, $l(u'_i)\geq l(u_{i+1})$ for $i=1,\dotsc,l-1$. 
  It follows that the shape of $w'$, which is $(l(u'_1),\dotsc,l(u'_l))$ interleaves the shape of $w$, which is $(l(u_1),\dotsc,l(u_l))$.
  Since $u_i\lhd u_{i+1}$, it follows that $u'_i\lhd u_{i+1}$ for $i=1,\dotsc,l-1$.
\end{proof}
\begin{lemma}
  \label{lemma:equivalence-restriction}
  If $v,w\in A_n^\dagger$ are Knuth equivalent, then their restrictions to $A_{n-1}$, $\bar v$ and $\bar w$ are Knuth equivalent in $A_{n-1}^\dagger$.
\end{lemma}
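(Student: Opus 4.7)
The plan is to reduce the assertion to a single Knuth move and then analyse cases based on where the letter $n$ occurs. Since Knuth equivalence is the congruence generated by $\kappa_1$ and $\kappa_2$, and since restriction to $A_{n-1}$ is a monoid homomorphism ($\overline{pq}=\bar p\bar q$), it suffices to prove that when $v$ and $w$ differ by a single Knuth move, $\bar v$ and $\bar w$ are Knuth equivalent in $A_{n-1}^\dagger$. Thus I may assume $v = p\, v_1\, q$ and $w = p\, v_2\, q$, where $(v_1,v_2)$ is one of the two relations, and the task reduces to showing $\bar{v_1}\equiv \bar{v_2}$.

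\textbf{Locating the letter $n$.} The strict inequalities built into the two relations, together with the hypothesis that $xyz$ is a timed row, pin down where $n$ can appear. In $\kappa_1$, the condition that the last letter of $y$ is strictly less than $z(0)$ forces every occurrence of $n$ in $xyz$ to lie in a tail of $z$: thus $\bar x = x$, $\bar y = y$, and $z = \bar z\, n^s$ for some $s\geq 0$. In $\kappa_2$, the condition that the last letter of $x$ is strictly less than $y(0)$ rules out $n$ in $x$; if $n$ appears in $y$ then $y$ ends in $n$, which together with the row condition forces $z = n^{l(z)}$ and hence $\bar z = \emptyset$; otherwise $\bar y = y$ and $z = \bar z\, n^s$ as before.

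\textbf{Case $\kappa_1$.} If $\bar z = \emptyset$ then both $\bar{v_1} = \bar x \bar z \bar y$ and $\bar{v_2} = \bar z \bar x \bar y$ collapse to $xy$, so the two restrictions are literally identical. Otherwise, using $l(y) = l(z) \geq l(\bar z)$, split $y = y_1 y_2$ with $l(y_1) = l(\bar z)$. As a prefix of a row, $y_1$ is itself a timed row, and $x y_1 \bar z$ is a timed row in $A_{n-1}$. The last letter of $y_1$ is at most the last letter of $y$, which is strictly less than $\bar z(0) = z(0)$, so the triple $(x, y_1, \bar z)$ satisfies the hypotheses of $\kappa_1$. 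A single Knuth move in the context $(\,\cdot\,)\,y_2$ then yields $\bar{v_1} = x\bar z y_1 y_2 \equiv \bar z x y_1 y_2 = \bar{v_2}$.

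\textbf{Case $\kappa_2$ and main obstacle.} If $n$ occurs in $y$, then by the preceding paragraph $\bar z = \emptyset$, so both $\bar{v_1} = \bar y\, x\, \bar z$ and $\bar{v_2} = \bar y\, \bar z\, x$ reduce to $\bar y\, x$. Otherwise $\bar x = x$, $\bar y = y$, and $z = \bar z\, n^s$; the equalities $l(x) = l(y)$ and the condition that the last letter of $x$ is less than $y(0)$ are untouched, $xy\bar z$ is still a row, and a single application of $\kappa_2$ gives $\bar{v_1} = yx\bar z \equiv y\bar z x = \bar{v_2}$. The only non-routine step is the $\kappa_1$ case with $\bar z \neq \emptyset$ and $s > 0$, where truncating $z$ destroys the length condition $l(z) = l(y)$; the length-restoring split $y = y_1 y_2$ repairs this, and the remaining verifications reduce to the facts that prefixes of rows are rows and that the strict row inequalities of the original relation are inherited by the truncations.
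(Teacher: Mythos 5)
Your proof is correct and follows the paper's approach: reduce to a single Knuth move (since restriction is a monoid homomorphism), observe that the inequalities force $\bar x = x$, $\bar y = y$ and only the tail of $z$ is affected, and for $\kappa_1$ repair the broken constraint $l(z)=l(y)$ by splitting $y = y'y''$ with $l(y')=l(\bar z)$ and applying $\kappa_1$ to the triple $(x, y', \bar z)$ in the context $y''$. The paper disposes of $\kappa_2$ with ``a similar argument works''; your proposal supplies those details (noting correctly that the length constraint $l(x)=l(y)$ is untouched by the restriction) and also spells out the degenerate cases $\bar z=\emptyset$ and $n$ occurring in $y$, which the paper leaves implicit.
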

\begin{proof}
  Applying the restriction to $A_{n-1}$ map $w\mapsto \bar w$ to both sides of the Knuth relation (\ref{eq:tk1}) gives:
  $x\bar z y$ and $\bar z x y$.
  Write $y=y'y''$, where $l(y')=l(\bar z)$, we have
  \begin{displaymath}
    x\bar zy'y'' \equiv \bar zxy'y'',
  \end{displaymath}
  a Knuth relation in in $A_{n-1}^\dagger$.
  A similar argument works for the Knuth relation (\ref{eq:tk2}).
\end{proof}
\begin{theorem}
  \label{theorem:unique-timed-tableaux}
  Every Knuth equivalence class in $A_n^\dagger$ contains a unique timed tableau.
\end{theorem}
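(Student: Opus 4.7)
The plan is to argue by induction on the alphabet size $n$. Existence of a timed tableau in each Knuth class follows from Lemma~\ref{lemma:reduction-to-tab} together with Theorem~\ref{theorem:tableauness-of-insertion}. For $n=1$, uniqueness is trivial: no Knuth relation can apply because the strict inequality $\lim y<z(0)$ is unsatisfiable over a one-letter alphabet, so each Knuth class is a singleton.

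For the inductive step, suppose the statement holds in $A_{n-1}^\dagger$, and let $P,Q\in\ttab_n$ be Knuth equivalent. Lemma~\ref{lemma:Knuth-Greene} and Lemma~\ref{lemma:tableau-shape-greene} together give $\shape(P)=\shape(Q)=\lambda$. Lemma~\ref{lemma:equivalence-restriction} gives $\bar P\equiv\bar Q$ in $A_{n-1}^\dagger$, and Lemma~\ref{lemma:restriction-interleaf} places both restrictions in $\ttab_{n-1}$; so the inductive hypothesis yields $\bar P=\bar Q$.

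It remains to show that a timed tableau in $A_n$ is determined by its restriction and its shape. Write the row decomposition of $P$ as $u_l\dotsb u_1$ and split each row as $u_i=u_i'\,n^{t_i}$, where $u_i'$ is free of the letter $n$. The strict inequality $u_{i+1}(0)>u_i(0)$ coming from $u_i\lhd u_{i+1}$ forces the last letter of $u_{i+1}'$ to exceed the first letter of $u_i'$, so $u_{i+1}'u_i'$ is not a row; also $u_i'$ can be empty only for $i=l$, since otherwise $u_{i+1}$ would need a letter greater than $n$. Hence $u_l'u_{l-1}'\dotsb u_1'$, with the top piece dropped if empty, is exactly the row decomposition of $\bar P$. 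From $\bar P$ and $\lambda$ one reads off each $u_i'$, recovers $t_i=\lambda_i-l(u_i')$, and reconstructs each $u_i$; so $P$ is uniquely determined by $(\bar P,\lambda)$. Applying the same reconstruction to $Q$ gives $P=Q$.

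This is the familiar induction-on-alphabet-size template from the classical Robinson--Schensted theory, and I do not foresee a serious obstacle. The only step specific to the timed setting that needs checking is that the naïve splitting $u_l'\dotsb u_1'$ genuinely is the row decomposition of $\bar P$, and the strict-first-letter inequality noted above handles this in a single line.
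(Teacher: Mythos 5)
Your proof is correct and follows essentially the same induction-on-alphabet-size route as the paper: existence via Lemma~\ref{lemma:reduction-to-tab}, equal shapes via Lemmas~\ref{lemma:Knuth-Greene} and~\ref{lemma:tableau-shape-greene}, restriction to $A_{n-1}$ via Lemmas~\ref{lemma:restriction-interleaf} and~\ref{lemma:equivalence-restriction}, and reconstruction of the tableau from its restriction and its shape. Your final paragraph spells out why $u_l'\dotsb u_1'$ is the row decomposition of $\bar P$, a point the paper leaves implicit, but the underlying argument is the same.
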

\begin{proof}
  The existence of a timed tableau in each Knuth equivalence class is ensured by Lemma~\ref{lemma:reduction-to-tab}.
  The proof of uniqueness is by induction on $n$.
  The base case, where $n=1$ is trivially true.
  Now suppose $v$ and $w$ are Knuth equivalent timed tableaux in $A_n^\dagger$.
  By Lemmas~\ref{lemma:restriction-interleaf} and~\ref{lemma:equivalence-restriction} $\bar v$ and $\bar w$ are Knuth equivalent timed tableaux in $A_{n-1}^\dagger$.
  By the induction hypothesis, $\bar v=\bar w$.
  Let $\lambda=(\lambda_1,\dotsc,\lambda_l)$ be the shape of this timed tableau.
  By Lemma~\ref{sec:knuth-equiv-green}, $v$ and $w$ have the same Greene invariants, and therefore the same shape $\mu=(\mu_1,\dotsc,\mu_{l+1})$.
  It follows that both $v$ and $w$ are obtained from $\bar v=\bar w$ by appending $n^{\mu_i-\lambda_i}$ to the $i$th row of $\bar v= \bar w$ for each $i$, hence $v=w$.
\end{proof}
\subsection{Characterization of Knuth Equivalence Classes}
\label{sec:char-timed-knuth}
Classical Knuth equivalence can be characterized in terms of Greene invariants (see \cite[Theorem~2.15]{plaxique}).
The same characterization works for timed Knuth equivalence.
\begin{theorem}
  Let $w$ and $w'$ be timed words in $A_n^\dagger$.
  Then $w$ and $w'$ are Knuth equivalent if and only if, for all timed words $u$ and $v$ in $A_n^\dagger$, $a_k(uwv)=a_k(uw'v)$ for all $k\geq 1$.
\end{theorem}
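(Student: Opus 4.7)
The forward direction is immediate: $w\equiv w'$ implies $uwv\equiv uw'v$ by the stability of $\equiv$ under concatenation, so Lemma~\ref{lemma:Knuth-Greene} yields matching Greene invariants in every context.

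For the converse, assume $a_k(uwv)=a_k(uw'v)$ for every $u,v\in A_n^\dagger$ and every $k$. Since $w\equiv P(w)$ and $w'\equiv P(w')$ by Lemma~\ref{lemma:reduction-to-tab}, Theorem~\ref{theorem:unique-timed-tableaux} reduces the task to proving $P(w)=P(w')$. For any timed row $v$, Lemma~\ref{lemma:reduction-to-tab} applied to $P(w)v$ together with uniqueness gives $P(wv)=\ins(P(w),v)$; thus the hypothesis with $u=\emptyset$ and Theorem~\ref{theorem:timed-version-greene} yield
\begin{displaymath}
  \shape(\ins(P(w),v))=\shape(\ins(P(w'),v)) \text{ for every timed row }v,
\end{displaymath}
and the case $v=\emptyset$ already gives a common shape $\lambda=(\lambda_1,\dotsc,\lambda_l)$ for $P(w)$ and $P(w')$.

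The problem reduces to the following key claim: two timed tableaux $T,T'$ of the same shape that produce equal shapes under insertion of every timed row must coincide. My approach is to recover the row decomposition $T=u_l\dotsb u_1$ one row at a time, starting with $u_1$. Fix $c\in A_n$; by Definition~\ref{definition:timed-row-insertion}, the length of the new row produced by $\rowins(u_1,c^t)$---which equals the first entry of $\shape(\ins(T,c^t))$---is a piecewise linear function of $t$: identically $\lambda_1$ for $0\leq t\leq \lambda_1-t_0(c)$ and then $t_0(c)+t$ thereafter, where $t_0(c)$ is the measure of letters $\leq c$ in $u_1$, and identically $\lambda_1+t$ when $c\geq\max u_1$ (with no kink). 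Reading $\max u_1$ as the least $c$ for which the growth is linear from $t=0$, then $t_0(c)$ from the kink location for each smaller $c$, and taking successive differences, recovers the $c$-measures in $u_1$, hence $u_1$ itself.

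Having determined $u_1$, for every timed row $v$ the bumped segment $v_1'$ in $(v_1',u_1')=\rowins(u_1,v)$ is a known function of $v$, and the remaining entries of $\shape(\ins(T,v))$ compute $\shape(\ins(u_l\dotsb u_2,v_1'))$. Choosing $v$'s so that $v_1'$ ranges over a sufficient family---possible via the bijectivity in Corollary~\ref{corollary:row-insertion-bijection}---and applying the same piecewise-linear extraction to the sub-tableau $u_l\dotsb u_2$ recovers $u_2$; iterating gives all of the rows. The \emph{main obstacle} is this iterative step: once $u_1,\dotsc,u_{i-1}$ are known, one must verify that, as $v$ varies over all timed rows, the induced input $v_{i-1}'$ to $u_i$ sweeps out a family rich enough for the extraction---particularly to access letters in $u_i$ absent from $u_1,\dotsc,u_{i-1}$---which requires using multi-segment inputs $v$ and careful bookkeeping via Corollary~\ref{corollary:row-insertion-bijection}.
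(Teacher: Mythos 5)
The forward direction is exactly the paper's.  Your converse, however, rests on a reduction to a ``key claim'' that is \emph{false}, so the plan has a fatal gap rather than merely a bookkeeping obstacle.

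\textbf{Counterexample to the key claim.}  Take $T=3^{1}1^{2}$ and $T'=4^{1}1^{2}$ in $A_n^\dagger$ for any $n\geq 4$; both are timed tableaux of shape $(2,1)$.  Every value of the first row $1^{2}$ equals $1$, so for any letter $c\geq 1$ the row $1^{2}$ never exceeds $c$, and hence $\rowins(1^{2},v)=(\emptyset,\,1^{2}v)$ for \emph{every} timed row $v$.  Nothing is ever bumped into the second row, so
\begin{displaymath}
  \shape\bigl(\ins(T,v)\bigr)=(2+l(v),\,1)=\shape\bigl(\ins(T',v)\bigr)\quad\text{for every timed row }v,
\end{displaymath}
yet $T\neq T'$.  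Your ``key claim'' therefore cannot hold: two distinct timed tableaux of the same shape can have identical shape data under insertion of \emph{every} timed row.

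\textbf{Why your extraction cannot be salvaged by bookkeeping.}  The obstacle you flag at the end --- access to letters of $u_i$ absent from $u_1,\dotsc,u_{i-1}$ --- is not a detail to be handled carefully; it is structurally unavoidable within your setup.  The bumped word $v_1'$ in $\rowins(u_1,v)$ is always a subword of $u_1$, so it can only contain letters that already appear in $u_1$.  If some letter of $u_2$ strictly exceeds every letter of $u_1$ (as in $T$ above), no choice of timed row $v$, single-segment or multi-segment, will ever probe it.  Restricting to $u=\emptyset$ and $v$ a timed row discards exactly the part of the hypothesis that is needed.

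\textbf{What the paper does instead.}  The paper argues by contraposition.  Assuming $P(w)\neq P(w')$, it finds the least row index $i$ at which the tableaux differ, uses the single-row case (morally the same piecewise-linear idea you use) to find $c\in A_n$ and $T>0$ so that row insertion of $c^T$ into $u_i$ and $u_i'$ produces rows of different lengths, and then crucially \emph{prepends} a left context $u=1^T2^T\dotsb(i-1)^T$ with $T$ large, which saturates the first $i-1$ rows so that the interesting material sits in row $i$; it simultaneously builds a right context $v$ via the deletion map (Definition~\ref{definition:deletion}) so that $c^T$ ends up being bumped into row $i$.  The shapes of $P(uwv)$ and $P(uw'v)$ then differ, and Theorem~\ref{theorem:timed-version-greene} converts this into differing Greene invariants.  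The left context $u$ is indispensable, and it is precisely what your plan never uses.
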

\begin{proof}
  If $w$ and $w'$ are Knuth equivalent, then so are $uwv$ and $uw'v$.
  By the timed version of Greene's theorem (Theorem~\ref{theorem:timed-version-greene}) $a_k(uwv)=a_k(uw'v)$ for all $u,v\in A_n^\dagger$.

  For the converse, suppose that $w$ and $w'$ are not Knuth equivalent.
  Then $P(w)\neq P(w')$.
  If $P(w)$ and $P(w')$ do not have the same shape, then by Theorem~\ref{theorem:timed-version-greene}, they do not have the same Greene invariants, so taking $u=v=\emptyset$ proves the result.

  Now suppose that $w$ and $w'$ are rows of the same length.
  If $w\neq w'$, there exist decompositions $w=xc^ty$ and $w'=x{c'}^ty'$, where $c\neq c'$, and $t>0$.
  If $c<c'$, then for $T>t+l(y)$,
  \begin{displaymath}
    a_1(xc^tyc^T) = l(x)+t+T, \text{ while } a_1(x{c'}^tyc^T) = l(x)+T,
  \end{displaymath}
  thereby proving the result.

  In the general case, suppose $w=u_lu_{l-1}\dotsb u_1$ and $w'=u'_lu'_{l-1}\dotsb u'_1$ are row decompositions.
  Let $i$ be the least integer such that $u_i\neq u'_i$.
  By the proof for rows, there exists $c\in A_n$, and $T>0$ such that when $(v,x)=\rowins(u_i,c^T)$ and $(v',x')=\rowins(u'_i,c^T)$, then $l(x)\neq l(x')$.
  Also, note that $c$ is at least $i$, the least possible value of the $i$th row of a tableau.

  Now assume that $T>l(u_j)$ for $j=1,\dotsc,i-1$.
  Take $u=1^T2^T\dotsb (i-1)^T$.
  Then we have
  \begin{align*}
    P(uw)&=u_l\dotsb u_{i+1} i^T u_i (i-1)^T u_{i-1} \dotsb 1^T u_1,\\
    P(uw')&=u'_l\dotsb u'_{i+1} i^T u'_i (i-1)^T u_{i-1} \dotsb 1^T u_1.\\
  \end{align*}
  Now $z=c^T(i-1)^T u_{i-1} \dotsb 1^T u_1$ is a timed tableau.
  Let 
  \begin{displaymath}
    (v,\bar z)=\del_{(T+\lambda_1,\dotsc,T+\lambda_{i-1})}(z).
  \end{displaymath}
  Then when $P(uwv)$ and $P(uw'v)$ are computed, the calculations are the same for the first $i-1$ rows.
  But then $c^T$ is inserted into $u_i$ and $u'_i$ to obtain the $i$th rows of $P(uwv)$ and $P(uw'v)$, which, by our earlier argument, will have different lengths.
\end{proof}
\section{The Real RSK Correspondence}
\label{sec:rsk}
\subsection{Definition using Timed Insertion Tableaux}
\label{sec:defin-using-timed}
Let $M_{m\times n}(\rp)$ denote the set of all $m\times n$ matrices with non-negative real entries.
Given $A=(a_{ij})\in M_{m\times n}(\rp)$, define its \emph{timed column word} $u_A$, and \emph{timed row word} $v_A$ as follows:
\begin{align*}
  u_A & = 1^{a_{11}}2^{a_{12}}\dotsb n^{a_{1n}}\,1^{a_{21}}2^{a_{22}}\dotsb n^{a_{2n}}\,\dotsb \,1^{a_{m1}}2^{a_{m2}}\dotsb n^{a_{mn}}.\\
  v_A & = 1^{a_{11}}2^{a_{21}}\dotsb m^{a_{m1}}\,1^{a_{12}}2^{a_{22}}\dotsb m^{a_{m2}}\,\dotsb \,1^{a_{1n}}2^{a_{2n}}\dotsb m^{a_{mn}}.
\end{align*}
The timed word $u_A$ is obtained by reading column numbers of $A$ along its rows, timed by its entries.
The timed word $v_A$ is obtained by reading the row numbers of $A$ along its columns, timed by its entries.
Define:
\begin{equation}
  \label{eq:rsk}
  \rsk(A) = (P(u_A), P(v_A)).
\end{equation}
This is a direct generalization of the definition of the RSK correspondence given in \cite[Section~18]{schur_poly}.
\begin{example}
  Let
  \begin{displaymath}
    A = 
    \begin{pmatrix}
      0.16 & 0.29 & 0.68 & 0.44\\ 
      0.29 & 0.70 & 0.38 & 0.45\\ 
      0.32 & 0.29 & 0.43 & 0.70
    \end{pmatrix}.
  \end{displaymath}
  Then $(P, Q) = \rsk(A)$ are given by:
  \begin{align*}
    P & = 3^{0.32}4^{0.29}2^{0.60}3^{0.65}4^{0.55}1^{0.77}2^{0.67}3^{0.52}4^{0.75},\\
    Q & = 3^{0.61}2^{1.38}3^{0.43}1^{1.57}2^{0.45}3^{0.70},
  \end{align*}
  which have common shape $(2.71,1.81,0.61)$.
  Visually, four different colours can be used to depict the letters of our alphabet:
  \begin{center}
    \includegraphics[width=0.7\textwidth]{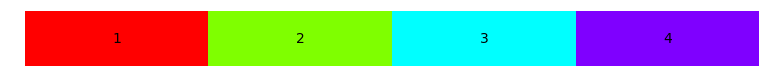},
  \end{center}
  then the tableaux $P$ and $Q$ can be represented by the images:
  \begin{center}
    \includegraphics[height=2cm]{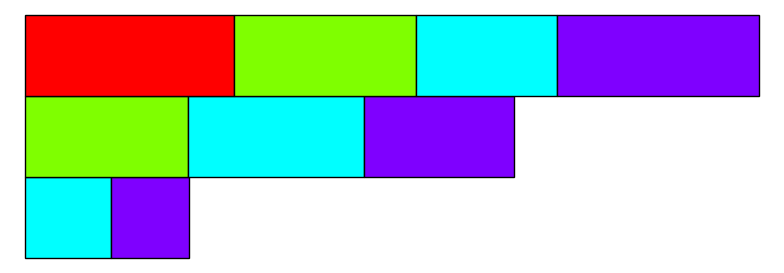}\hspace{1cm}\includegraphics[height=2cm]{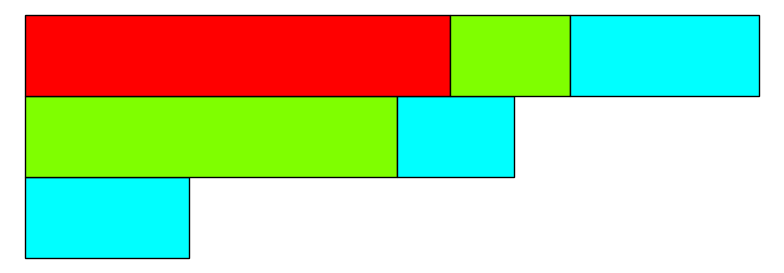}
  \end{center}
  From this visual representation, it is evident that $P$ and $Q$ are timed tableaux of the same shape.
\end{example}
\begin{theorem}
  \label{theorem:rsk}
  The function $\rsk$ defines a bijection:
  \begin{displaymath}
    \rsk: M_{m\times n}(\rp)\tilde\to \coprod_\lambda \ttab_n(\lambda)\times\ttab_m(\lambda),
  \end{displaymath}
  where $\lambda$ runs over all real partitions with at most $\min(m,n)$ parts.
\end{theorem}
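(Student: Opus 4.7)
The plan is to proceed by induction on $m$, after first establishing the preliminary identity $\shape(P(u_A)) = \shape(P(v_A))$. The base case $m=0$ is trivial, and the inductive step relies on the timed Pieri rule (Theorem~\ref{theorem:pieri}) together with its inverse via deletion (Definition~\ref{definition:deletion}), the restriction results (Lemmas~\ref{lemma:restriction-interleaf} and~\ref{lemma:equivalence-restriction}) combined with the uniqueness of the tableau in a Knuth class (Theorem~\ref{theorem:unique-timed-tableaux}), and the timed Greene theorem (Theorem~\ref{theorem:timed-version-greene}).

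\emph{Shape equality.} By Theorem~\ref{theorem:timed-version-greene} it suffices to verify $a_k(u_A) = a_k(v_A)$ for every $k\geq 1$. I would argue that a timed row subword of $u_A$ is parameterized by a choice of weights $0 \leq b_{ij} \leq a_{ij}$ whose support has no \emph{inversion}---i.e., no pair $(i,j),(i',j')$ with $b_{ij},b_{i'j'}>0$, $i<i'$, and $j>j'$---of total length $\sum_{i,j} b_{ij}$; an identical description parameterizes timed row subwords of $v_A$. Disjointness of $k$ such subwords translates to the single constraint $\sum_{\ell=1}^k b^{(\ell)}_{ij} \leq a_{ij}$ at each position, so $a_k(u_A)$ and $a_k(v_A)$ are values of one and the same optimization problem and must coincide. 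This combinatorial identification, though conceptually transparent, is the step I expect to require the most care, since timed subwords are defined indirectly via finite unions of intervals.

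\emph{Inductive step.} Let $A'$ denote the first $m-1$ rows of $A$ and write $r = 1^{a_{m1}} 2^{a_{m2}} \cdots n^{a_{mn}}$ for the last row read as a timed row in $A_n$. Directly from the definitions, $u_A = u_{A'} r$, whence $P(u_A) = \ins(P(u_{A'}), r)$; and the restriction $\overline{v_A}$ (obtained by stripping all $m^t$ segments) equals $v_{A'}$, so Lemma~\ref{lemma:equivalence-restriction} combined with Theorem~\ref{theorem:unique-timed-tableaux} yields $\overline{P(v_A)} = P(v_{A'})$. Given $(P, Q) \in \ttab_n(\lambda) \times \ttab_m(\lambda)$, I would construct the candidate preimage by first setting $\lambda' = \shape(\bar Q)$, which interleaves $\lambda$ by Lemma~\ref{lemma:restriction-interleaf}; then computing $(r, P') = \del_{\lambda'}(P)$; then invoking the inductive hypothesis to obtain a unique $A' \in M_{(m-1) \times n}(\rp)$ with $\rsk(A') = (P', \bar Q)$; and finally appending $r$ (as the weights $a_{mj}$) as the last row of $A'$ to form $A$.

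The verification then splits into the two identities $P(u_A)=P$ and $P(v_A)=Q$. The first is $\ins(P', r) = P$, built into the definition of $\del_{\lambda'}$. For the second, $P(v_A)$ and $Q$ have identical restrictions to $A_{m-1}$ (namely $P(v_{A'}) = \bar Q$) and identical shapes $\lambda$ (by the shape equality applied to $A$). A timed tableau in $A_m$ is uniquely determined by its restriction to $A_{m-1}$ together with its shape, since one reconstructs it by appending $m^{\lambda_i - \lambda'_i}$ to the $i$-th row---the reconstruction used already at the end of the proof of Theorem~\ref{theorem:unique-timed-tableaux}. Hence $P(v_A) = Q$, closing the induction.
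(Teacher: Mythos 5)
Your proposal follows essentially the same route as the paper: the shape-equality step is the paper's Lemma~\ref{lemma:same-shape} (your inversion-free-support parameterization of timed row subwords is precisely the paper's identification of such subwords with chains in the poset $P_{mn}$, and both feed into Theorem~\ref{theorem:timed-version-greene}), and your inductive step on $m$ is exactly the inversion of the insertion-recording algorithm (Lemma~\ref{lemma:insertion-rec-algo}) used in the paper's proof, recovering $(r_{m,A},P(u_{A'}))$ by deletion and $P(v_{A'})$ by restriction. Your write-up is somewhat more explicit about constructing and verifying the candidate preimage, but the key lemmas and decomposition are the same.
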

\begin{remark}
  Let $\mu_i$ denote the sum of the $i$th row of $A$, and $\nu_j$ the sum of the $j$th column.
  Let $\mu=(\mu_1,\dotsc,\mu_m)$, and $\nu=(\nu_1,\dotsc,\nu_n)$.
  Then, if $\rsk(A)=(P,Q)$ then $\wt(P)=\nu$, and $\wt(Q)=\mu$.
\end{remark}
\begin{remark}[Relation to Knuth's definition]
  Knuth~\cite{knuth} defined $\rsk(A)=(P,Q)$ for integer matrices in a slightly different manner.
  His definition of $P=P(u_A)$ is exactly the same as the definition here.
  However $Q$ is defined as a \emph{recording tableau} which has the same shape as $P$ by its very construction.
  With Knuth's construction, each step (insertion followed by recording) is reversible, and it is clear that a bijection is obtained.
  The symmetry property of the RSK correspondence, that $\rsk(A^T)=(Q,P)$ if $\rsk(A)=(P,Q)$ is then stated as a non-trivial theorem.

  The definition (\ref{eq:rsk}) is the extension to real matrices of the definition in \cite[Section~18]{schur_poly} for integer matrices.
  With this definition it is immediate that if $\rsk(A)=(P,Q)$, then $\rsk(A^T)=(Q,P)$ since $u_{A^T}=v_A$.
  However, it is not immediately clear that $P$ and $Q$ have the same shape, and that the correspondence is invertible.
  These are proved using Greene's theorem in \cite{schur_poly}.
  For real matrices, the timed version of Greene's theorem allows the proof of \cite{schur_poly} to be carried out for real matrices.
  For the sake of completeness, this argument is given in full detail below.
\end{remark}
\begin{lemma}
  \label{lemma:same-shape}
  For every $A\in M_{m\times n}(\rp)$, the tableaux $P(u_A)$ and $P(v_A)$ have the same shape.
\end{lemma}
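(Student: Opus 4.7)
The plan is to reduce the equality of shapes to an equality of Greene invariants via Theorem~\ref{theorem:timed-version-greene}, and then to show that both $a_k(u_A)$ and $a_k(v_A)$ compute the same manifestly row/column-symmetric quantity attached to the matrix $A$.

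First, I would give a combinatorial description of a single timed row subword of $u_A$. Because $u_A$ is constant, equal to the column index $j$, on the time interval corresponding to cell $(i,j)$, any subword is described by selecting, for each cell $(i,j)$, a non-negative real $b_{ij}\leq a_{ij}$ (the total measure chosen inside that cell). Two cells $(i_1,j_1)$ and $(i_2,j_2)$ with $b_{i_1 j_1}, b_{i_2 j_2}>0$ both contributing to the subword produce the letter pair $(j_1,j_2)$ if $(i_1,j_1)$ precedes $(i_2,j_2)$ in row-major reading order. The weakly increasing condition thus fails precisely when $i_1<i_2$ and $j_1>j_2$. Hence a timed row subword of $u_A$ corresponds to a choice of $b_{ij}\in[0,a_{ij}]$ whose support contains no such inversion; call such a support a \emph{monotone configuration}. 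For $k$ pairwise disjoint timed row subwords I would layer these choices: one picks $b_{ij}^{(1)},\dotsc,b_{ij}^{(k)}\geq 0$ with $\sum_\ell b_{ij}^{(\ell)}\leq a_{ij}$, each layer supported on a monotone configuration, and
\begin{displaymath}
  a_k(u_A) = \sup\sum_{\ell=1}^k\sum_{i,j}b_{ij}^{(\ell)},
\end{displaymath}
the supremum being taken over all such layered choices.

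I would then perform the identical analysis for $v_A$. Reading cell $(i,j)$ now produces the letter $i$ for duration $a_{ij}$, and the reading order is column-major. A weakly increasing subword again forbids exactly the pattern $i_1>i_2$, $j_1<j_2$, which is the same inversion condition as before. The resulting formula for $a_k(v_A)$ is therefore identical to the one for $a_k(u_A)$, term by term. Consequently $a_k(u_A)=a_k(v_A)$ for every $k\geq 1$, and Theorem~\ref{theorem:timed-version-greene} gives $\shape(P(u_A))=\shape(P(v_A))$.

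The main obstacle I anticipate is the bookkeeping needed to pass rigorously between timed subwords (finite disjoint unions of half-open intervals in $[0,l(u_A))$) and the fractional cell data $(b_{ij}^{(\ell)})$. In particular one must verify that any $k$ pairwise disjoint timed row subwords can be refined, by cutting at cell boundaries, into such layered fractional data without changing total length, and conversely that any such data arises from some genuine disjoint choice of intervals. Both directions are routine but require one to split each $[a_{ij}\text{-interval}]$ into $k$ sub-intervals of lengths $b_{ij}^{(1)},\dotsc,b_{ij}^{(k)}$ and to concatenate the chosen pieces, layer by layer, in the reading order of the ambient timed word. Once this correspondence is in hand, the symmetry of the chain condition under $(i,j)\leftrightarrow(j,i)$ makes the equality $a_k(u_A)=a_k(v_A)$ transparent.
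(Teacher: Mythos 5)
Your proposal is correct and follows essentially the same route as the paper: both describe a timed row subword of $u_A$ (resp.\ $v_A$) as a fractional choice $b_{ij}\in[0,a_{ij}]$ supported on a chain of the poset $P_{mn}$, deduce that $a_k(u_A)$ and $a_k(v_A)$ are both given by the transpose-symmetric maximum of $\sum a_{ij}$ over unions of $k$ chains, and conclude via Theorem~\ref{theorem:timed-version-greene}. Your explicit layering $b_{ij}^{(1)},\dotsc,b_{ij}^{(k)}$ with $\sum_\ell b_{ij}^{(\ell)}\leq a_{ij}$ is a slightly more careful rendering of what the paper phrases as ``maximum over subsets $C$ that are unions of at most $k$ chains,'' and the cell-boundary bookkeeping you flag is indeed the one point the paper glosses over; otherwise the two arguments are the same.
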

\begin{proof}
  Any timed subword $w$ of $u_A$ is of the form
  \begin{displaymath}
    w=1^{b_{11}}2^{b_{12}}\dotsb n^{b_{1n}}\,1^{b_{21}}2^{b_{22}}\dotsb n^{b_{2n}}\,\dotsb \,1^{b_{m1}}2^{b_{m2}}\dotsb n^{b_{mn}},    
  \end{displaymath}
  where $0\leq b_{ij}\leq a_{ij}$ for all $(i,j)$.
  If $w$ is a row, then the indices $(i_1,j_1),\dotsc,(i_k,j_k)$ for which $b_{ij}>0$, when taken in the order in which they appear in $w$, must satisfy $i_1\leq \dotsc \leq i_k$, and $j_1\leq \dotsb \leq j_k$.
  Define a partial order on the set
  \begin{displaymath}
    P_{mn} = \{(i,j)\mid 1\leq i\leq m,1\leq j \leq n\}
  \end{displaymath}
  by $(i,j)\leq (i',j')$ if and only if $i\leq i'$ and $j\leq j'$.
  Then it follows that the $k$th timed Greene invariant of $u_A$ (Definition~\ref{definition:timed-Greene-invars}) is given by:
  \begin{displaymath}
    a_k(u_A) = \max_C \sum_{(i,j)\in C} a_{ij},
  \end{displaymath}
  where the maximum is taken over the set of all subsets $C\subset P_{mn}$ which can be written as a union of $k$ chains.
  Since the order relation on $P_{mn}$ corresponds to the order relation on $P_{nm}$ under $(i,j)\leftrightarrow (j,i)$, it follows that $a_k(v_A)=a_k(u_A)$ for all $k$.
  Thus, the timed version of Greene's theorem (Theorem~\ref{sec:timed-version-greene}) implies that $P$ and $Q$ have the same shape.
\end{proof}
\subsection{Insertion-Recording Algorithm for $\rsk(A)$}
\label{sec:insert-record-algor}
Given real partitions $\lambda$ and $\mu$ such that $\lambda$ interleaves $\mu$, and $w\in \ttab_{m-1}(\lambda)$, define the \emph{inflation of $w$ to shape $\mu$ by $m$} to be the unique tableau $\infl_\mu(w,m)$ of shape $\mu$ whose restriction to $m-1$ is equal to $w$.
In the notation of Section~\ref{sec:tabl-knuth-equiv}, $\overline{\infl_\mu(w,m)}=w$. 

Given $A\in M_n(\rp)$, let $r_{i,A} = 1^{a_{i1}}2^{a_{i2}}\dotsb n^{a_{in}}$.
Then $u_A=r_{1,A}r_{2,A}\dotsb r_{m,A}$.
\begin{center}
  \textbf{Insertion-Recording Algorithm}
\end{center}
\begin{itemize}
\item $P\ot \emptyset$, $Q\ot \emptyset$.
\item For $i=1,\dotsc, m$, repeat the following steps:
  \begin{itemize}
  \item $P\ot \ins(P, r_{i,A})$.
  \item $\lambda \ot \shape(P)$.
  \item $Q\ot \infl_\lambda(Q,i)$
  \end{itemize}
\item Return $(P, Q)$.
\end{itemize}
\begin{lemma}
  \label{lemma:insertion-rec-algo}
  For every $A\in M_{m\times n}(\rp)$, the output of the insertion-recording algorithm is $\rsk(A)$ as defined in (\ref{eq:rsk}).
\end{lemma}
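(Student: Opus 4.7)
The plan is to induct on $m$, the number of rows of $A$, and handle the two coordinates $P$ and $Q$ of the output separately. For the first coordinate, the claim is that after $m$ iterations the stored $P$ equals $P(u_A)$. Writing $P^{(i)}$ for the value of $P$ after step $i$, the inductive step uses Lemma~\ref{lemma:reduction-to-tab} in the form $\ins(w,v)\equiv wv$: from $P^{(i-1)}\equiv r_{1,A}\dotsb r_{i-1,A}$ we obtain $P^{(i)}=\ins(P^{(i-1)},r_{i,A})\equiv r_{1,A}\dotsb r_{i,A}$. At the last step, $P^{(m)}\equiv u_A\equiv P(u_A)$, and since both sides are timed tableaux, the uniqueness statement in Theorem~\ref{theorem:unique-timed-tableaux} forces $P^{(m)}=P(u_A)$.

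For the recording tableau, the strategy is to show, by induction on $m$, that $Q^{(m)}=P(v_A)$, where $A^{(i)}$ denotes the submatrix consisting of the first $i$ rows of $A$. Two facts are used to compare these tableaux by their restrictions. First, since every entry added to $Q$ at step $j$ equals $j$, the restriction of $Q^{(m)}$ to $A_{m-1}$ is exactly the tableau $Q^{(m-1)}$ produced by the algorithm applied to $A^{(m-1)}$; by the induction hypothesis this equals $P(v_{A^{(m-1)}})$. Second, the restriction of $P(v_A)$ to $A_{m-1}$ equals $P(v_{A^{(m-1)}})$ as well: by Lemma~\ref{lemma:reduction-to-tab}, $v_A\equiv P(v_A)$; applying Lemma~\ref{lemma:equivalence-restriction} gives $\overline{v_A}\equiv \overline{P(v_A)}$; since $\overline{v_A}=v_{A^{(m-1)}}$ and, by Lemma~\ref{lemma:restriction-interleaf}, $\overline{P(v_A)}$ is a timed tableau, Theorem~\ref{theorem:unique-timed-tableaux} gives $\overline{P(v_A)}=P(v_{A^{(m-1)}})$.

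With both restrictions identified, it remains to pin down the layer of $m$'s. The shape of $Q^{(m)}$ is $\lambda^{(m)}=\shape(P^{(m)})$ by construction; on the other hand $P(v_A)$ has shape $\shape(P(u_A))=\lambda^{(m)}$ by Lemma~\ref{lemma:same-shape} together with the result of the first paragraph. Since $Q^{(m)}$ and $P(v_A)$ are both timed tableaux of the same shape whose restrictions to $A_{m-1}$ coincide, the argument at the end of the proof of Theorem~\ref{theorem:unique-timed-tableaux} applies: each is obtained from the common restriction by appending $m^{\lambda_i^{(m)}-\mu_i}$ to row $i$, where $\mu$ is the shape of the restriction, and hence they are equal. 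The base case $m=1$ is immediate, since $r_{1,A}$ is itself a timed row, $v_A=1^{\sum_j a_{1j}}$, and both the algorithm and $P(v_A)$ return $1^{l(r_{1,A})}$.

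The only mildly delicate point is the interchange of the restriction map with $P$, since restriction is defined on exponential strings rather than on tableaux directly; this is precisely what Lemmas~\ref{lemma:restriction-interleaf} and~\ref{lemma:equivalence-restriction}, combined with the uniqueness of the tableau in a Knuth class, are designed to handle. Once that is granted, the rest is bookkeeping between shapes and restrictions, and no new algorithmic analysis is required.
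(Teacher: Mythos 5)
Your proposal is correct and follows essentially the same route as the paper: induction on $m$, using Lemma~\ref{lemma:equivalence-restriction}, Lemma~\ref{lemma:restriction-interleaf}, and Theorem~\ref{theorem:unique-timed-tableaux} to identify the restriction of $P(v_A)$ to $A_{m-1}$ with $P(v_{A^{(m-1)}})$, and then inflating by the correct shape. The only differences are cosmetic: you go through Knuth equivalence plus uniqueness for the $P$-coordinate where the paper directly asserts $P(u_A)=\ins(P(u_{A'}),r_{m,A})$, and you make explicit the use of Lemma~\ref{lemma:same-shape} to match shapes, which the paper leaves implicit.
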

\begin{proof}
  The proof is by induction on the number $m$ of rows in $A$.
  The base case of $m=1$ is trivial.

  Now suppose $A'$ denotes the submatrix consisting of the first $m-1$ rows of $A$.
  Then $u_A = u_{A'}r_{m,A}$, so that $P(u_A)=\ins(P(u_{A'}),r_{m,A})$,
  Also, the restriction $\bar v_A$ of $v_A$ to $m-1$ is $v_{A'}$.
  
  Since $v_{A'}$ is the restriction of $v_A$ to $A_{m-1}$, by Lemma~\ref{lemma:equivalence-restriction}, $P(v_{A'})$ is Knuth equivalent to the restriction of $P(v_A)$ to $A_{m-1}$.
  Theorem~\ref{theorem:unique-timed-tableaux}, implies that $P(v_{A'})$ is equal to the restriction of $P(v_A)$ to $A_{m-1}$.
  Therefore $P(v_A)=\infl_\lambda(P(v_{A'}), m)$, which is the output of the insertion-recording algorithm.
\end{proof}
\begin{proof}
  [Proof of Theorem~\ref{theorem:rsk}]
  The proof uses the fact that the insertion-recording algorithm is invertible.
  Following the notation of the proof of Lemma~\ref{lemma:insertion-rec-algo}, it suffices to recover $r_{m,A}$, $P(u_{A'})$ and $P(v_{A'})$ from $P(u_A)$ and $P(v_A)$ to reverse the insertion-recording algorithm.
  For this, observe that $P(v_{A'})$ is just the restriction of $P(v_A)$ to $A_{m-1}$.
  If $\mu$ is the shape of $P(v_{A'})$, then $(r_{m,A},P(u_{A'}))=\del_\mu(P(u_A))$ (see Definition~\ref{definition:deletion}).
\end{proof}
\subsection{Light-and-Shadows Real RSK}
\label{sec:light-and-shadows-rsk}
Viennot described a visual version of the Robinson-Schensted-Correspondence for permutations, using the \emph{light and shadows} method \cite{viennot1977forme}.
This algorithm was extended to the RSK correspondence on integer matrices by Fulton using the matrix-ball method \cite{fulton}.
Another such extension, called the VRSK algorithm, was given in \cite[Chapter~3]{rtcv}.
In VRSK one can work directly with the matrices themselves, without having to draw them as configurations of points in the plane, which get unwieldy when matrices have large entries.
Another unforeseen advantage of the VRSK algorithm is that a minor variant works for real matrices, giving the correspondence of (\ref{eq:rsk}).
This new algorithm, which we call the \emph{light-and-shadows real RSK} is introduced in this section.
The piecewise linear nature of the RSK correspondence becomes clear from this algorithm.
\begin{definition}
  [Sequence of Leading Points]
  For a matrix $A\in M_{m\times n}(\rp)$, consider the set
  \begin{displaymath}
    \supp(A) = \{(i,j)\in P_{mn}\mid a_{ij}>0\}.
  \end{displaymath}
  Then the sequence $L(A)$ of leading points of $A$ is the set $\max(\supp(A))$ (with respect to the poset structure on $P_{mn}$) arranged in a sequence
  \begin{displaymath}
    L(A) = (i_1,j_1),\dotsc,(i_r,j_r)
  \end{displaymath}
  such that $j_1<\dotsb <j_r$ and (since this set is an antichain in $P_{mn}$) $i_1>\dotsb >i_r$.
\end{definition}
\begin{center}
  \textbf{Light-and-Shadows Real RSK}
\end{center}
\begin{itemize}
\item $P\ot \emptyset$, $Q\ot\emptyset$.
  \textcolor{blue}{
  \item While $A$ is non-zero repeat the following steps:
    \begin{itemize}
    \item Set $S\ot 0_{m\times n}$ ($m\times n$ zero matrix)
    \item Set $p=\emptyset$, $q=\emptyset$.
    \item While $A$ is non-zero repeat the following steps:
      \begin{itemize}
      \item Compute $L(A) = (i_1,j_1),\dotsc,(i_r,j_r)$ of $A$
      \item Let $m(A)=\min\{a_{i_1j_1},\dotsc,a_{i_r,j_r}\}$
      \item Set $a_{i_s,j_s}\ot a_{i_s,j_s}-m(A)$ for $s=1,\dotsc r$
      \item Set $s_{i_{s+1},j_s}\to s_{i_{s+1},j_s}+m$ for $s=1,\dotsc,r-1$
      \item Set $p\ot pj_1^m$, $q\ot qi_r^m$
      \end{itemize}
    \item $P\ot pP$, $Q\ot qQ$
    \item $A\ot S$
    \end{itemize}
  }
\item Return $(P, Q)$
\end{itemize}
\begin{theorem}
  When the light-and-shadows real RSK algorithm is applied to $A\in M_{m\times n}(\rp)$, it return $\rsk(A)$.
\end{theorem}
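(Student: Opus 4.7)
The plan is to proceed by induction on the number $l$ of iterations of the outer loop, which I will show coincides with the number of rows in the shape $\lambda = (\lambda_1, \dotsc, \lambda_l)$ of $P(u_A)$. The induction rests on a single-iteration lemma: if the outer loop is entered with current matrix $A$, then after one complete traversal of the inner loop, the resulting timed row $p$ is the bottom row of $P(u_A)$, the timed row $q$ is the bottom row of $P(v_A)$, and the shadow matrix $S$ satisfies that $P(u_S)$ (respectively $P(v_S)$) is the timed tableau obtained from $P(u_A)$ (respectively $P(v_A)$) by removing its bottom row. Once this lemma is in hand, repeated application together with the prepending update $P \gets pP$, $Q \gets qQ$ assembles the tableaux of \eqref{eq:rsk} row by row, from bottom (first iteration) to top (last iteration), matching the row-decomposition convention in which the topmost row appears leftmost.

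For the single-iteration lemma, the main tool is the chain-weight characterization of Greene invariants obtained in the proof of Lemma~\ref{lemma:same-shape}: $a_k(u_A) = \max_C \sum_{(i,j)\in C} a_{ij}$, where $C$ ranges over unions of $k$ chains in the poset $P_{mn}$. Each pass of the inner loop subtracts $m=\min\{a_{i_1j_1},\dotsc,a_{i_rj_r}\}$ from the entries of the leading antichain and records $m$ copies of the letters $j_1$ into $p$ and $i_r$ into $q$; summing over all inner iterations, the total length $l(p) = l(q)$ equals the maximum weight of a single chain in $\supp(A)$, which by Theorem~\ref{theorem:timed-version-greene} equals $a_1(u_A) = \lambda_1$. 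To see that $p$ is a valid timed row, I would show that the leftmost column index $j_1$ is non-increasing across successive inner iterations: any new element of the leading antichain after subtraction must lie componentwise below a zeroed-out position of the previous antichain, while the surviving leading positions retain their columns. Analogously $i_r$ is non-decreasing, so prepending produces the weakly increasing timed row $q$.

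The second half of the lemma---that $S$ encodes the tableaux $P(u_A)$ and $P(v_A)$ with their bottom rows removed---I would establish via the Greene-invariant shift $a_k(u_S) = a_{k+1}(u_A) - a_1(u_A)$ for all $k\geq 1$. This identity will follow from a bijection between chain unions in $\supp(S)$ and chain unions in $\supp(A)$ carrying one extra ``top'' chain that coincides with the one greedily processed by the inner loop; the shadow placements are positioned precisely so that this chain correspondence is length-preserving. Combining this with weight conservation $\wt(u_S) = \wt(u_A) - \wt(p)$, the analogous statements for $v$, and the uniqueness of timed tableaux within their Knuth classes (Theorem~\ref{theorem:unique-timed-tableaux}), one concludes the required identification of $P(u_S)$ and $P(v_S)$. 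The main technical obstacle will be the combinatorial bookkeeping in this chain-union bijection through the shadow positions; however, because all quantities involved (entries, shadow increments, Greene invariants) are piecewise linear in the input $A$, the integer-matrix argument from \cite{rtcv} lifts to real matrices without essential change, and the termination of the outer loop (in $l$ steps) follows from the strict reduction of total weight $\lambda_1 > 0$ at each pass.
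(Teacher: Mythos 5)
Your approach is genuinely different from the paper's. The paper introduces an intermediate \emph{Row-wise RSK} algorithm and shows, by induction on the rows of $A$ and a case analysis of how the leading-point sequences change when a single row is appended to $A$, that one pass of the outer loop of the light-and-shadows algorithm computes exactly the same triple $(S,p,q)$ as one pass of the row-wise algorithm, which in turn is built directly from $\rowins$. You instead propose to isolate a ``single-iteration lemma'' and control it through the chain-weight characterization of Greene invariants. Your reduction to the first outer pass, your identification $l(p)=a_1(u_A)=\lambda_1$, and the Greene-invariant shift $a_k(u_S)=a_{k+1}(u_A)-a_1(u_A)$ are all sound ideas and could be made to work.

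However, there is a real gap in the final identification step. You argue that $P(u_S)$ equals $P(u_A)$ with its bottom row removed by combining (a) the shape of $P(u_S)$ obtained from the Greene-invariant shift, (b) the weight identity $\wt(u_S)=\wt(u_A)-\wt(p)$, and (c) Theorem~\ref{theorem:unique-timed-tableaux}. But Theorem~\ref{theorem:unique-timed-tableaux} says a Knuth class contains a unique tableau; it does not say a tableau is determined by its shape and weight, which is false already for $2\times 2$ semistandard examples. You have not shown that $u_S$ is Knuth-equivalent to the word $u_l\dotsb u_2$, where $P(u_A)=u_l\dotsb u_2 u_1$, so uniqueness in the Knuth class does not apply. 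To close the gap you would need either a proof that the full Gelfand--Tsetlin pattern matches (i.e.\ the shift identity $a_k\bigl(r^n_j(u_S)\bigr)=a_{k+1}\bigl(r^n_j(u_A)\bigr)-a_1\bigl(r^n_j(u_A)\bigr)$ for \emph{all} column restrictions $j$, appealing to Lemma~\ref{lemma:pl}), or a direct row-insertion/Knuth-equivalence argument along the lines the paper actually gives. A minor but real slip: since the algorithm performs $p\ot p\,j_1^m$ (appending, not prepending), the monotonicity you need is that the leftmost leading column $j_1$ is \emph{non-decreasing} across inner iterations, not non-increasing; this is in fact true because $j_1$ is simply the index of the leftmost nonzero column of the current matrix, and $\supp(A)$ only shrinks during the inner loop.
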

\begin{proof}
  For the proof, we introduce an algorithm that is midway between the insertion-recording algorithm of Section~\ref{sec:insert-record-algor} and the light-and-shadows real RSK.
  \begin{center}
    \textbf{Row-wise RSK Algorithm}
  \end{center}
  \begin{itemize}
  \item $P\ot \emptyset$, $Q\ot \emptyset$
    \textcolor{blue}{
    \item While $A$ is non-zero repeat the following steps:
      \begin{itemize}
      \item Set $p\ot \emptyset$, $q\ot \emptyset$
      \item For $i=1,\dotsc,m$, repeat the following steps:
        \begin{itemize}
        \item Set $S\ot 0_{m\times n}$.
        \item Set $(v,u)=\rowins(p,1^{a_{i1}}\dotsb n^{a_{in}})$
        \item If $v=1^{s_1}\dotsb n^{s_n}$, set $s_{ij}\ot s_j$ for $j=1,\dotsc,n$.
        \end{itemize}
      \item Set $A\ot S$.
      \item $P\ot pP$, $Q\ot \infl_{\shape(P)}(Q,i)$.
      \end{itemize}
      }
    \item return $(P, Q)$
  \end{itemize}
  The main loop of this algorithm starts with a matrix $A$, and replaces it with the matrix $S=s_{ij}$ computed using timed row insertion.
  It also computes the first row of the tableau $P$ and $Q$ as $p, q$.
  We claim that the function $A\mapsto (S, p, q)$ of the main loop of the row-wise RSK algorithm is the same as the function $A\mapsto (S, p, q)$ of the main loop of the light-and-shadows real RSK.
  We call $S$ the \emph{shadow matrix} of $A$.

  Write $A$ as a block matrix $\binom{A'}{A''}$, where $A'$ is an $(m-1)\times n$ matrix and $A''$ is a $1\times n$ matrix.
  It suffices to show that if the light-and-shadows real RSK algorithm return $(P',Q')$ on $A'$ and $(P, Q)$ on $A$, the $P=\ins(P',u_{A''})$.
  Here $u_{A''}$ is just the row:
  \begin{displaymath}
    1^{a_{m1}}\dotsb n^{a_{mn}}.
  \end{displaymath}

  The inner loop of the light-and-shadows real RSK algorithm produces a sequence $A'=A'_1, A'_2,\dotsc, A'_h$ of matrices as it runs on input $A'$.
  Let $L'_k=L(A'_k)$ and $m'_k=m(A'_k)$, for $k=1,\dotsc,h$.
  When the inner loop finishes running, we have $p'={j'_1}^{m'_1}\dotsb {j'_h}^{m'_h}$, and $q' = {i'_1}^{m'_1}\dotsb {i'_h}^{m'_h}$, where $j'_k$ is the least non-zero column, and $i'_k$ is the least non-zero row of $A'_k$.

  Now $A$ is obtained from $A'$ by adding a new row
  $
  \begin{pmatrix}
    a_{m1}&\dotsb & a_{mn}
  \end{pmatrix}
  $.
  To begin with, assume that this row has only one non-zero entry, $a_{mj_0}$.
  Let $L_1,L_2,\dotsc$, and $m_1,m_2,\dotsc$ be the corresponding sequences of leading points, and their corresponding smallest entries respectively.
  If $j_0\geq j_i$ for all $i$, then $L_k=L'_k$ for all $k=1,\dotsc,h$.
  In addition, $A$ has a singleton sequence of leading points $\{(m,j_0)\}$.
  As a result, the output of the main loop is $p=p'u_{A''}$, and $q=q'm^{a_{mj_0}}$, and the shadow matrix of $A$ is the same as the shadow matrix of $A'$.
  The same outcome is obtained from the main loop of the row-wise RSK algorithm.
  The hypothesis that $j_0\geq j_i$ for all $i$ is equivalent to saying that $A''$ has its non-zero entries to the left of any non-zero entries of $A'$.
  Therefore $P(u_A)=P(u_{A'})u_{A''}$, and the shadow matrix of $A'$ generated by row-wise RSK is the same as the shadow matrix of $A$ generated by row-wise RSK.
  
  Now suppose that $j_0<j_l$ for some $l$, and take the least such value $l$.
  Then the sequences of leading points $L_1,\dotsc,L_{l-1}$ of $A$ are the same as the sequences $L'_1,\dotsc,L'_{l-1}$.
  If $a_{mj_0}\geq m'_{j'_l}+\dots m'_{j'_h}$, then $L_k=\{(m,j_0)\}\cup L'_k$ for $k=l,\dotsc, h$.
  Therefore $p={j'_1}^{m'_1}\dotsb {j'_{l-1}}^{m'_{l-1}}j_0^{a_{mj_0}}$.
  From the definition of timed row insertion (Definition~\ref{definition:timed-row-insertion}), $\rowins(p',j_0^{a_{mj_0}}) = (j_l^{m_l}\dotsb j_h^{m_h},p)$.
  Also, $q=q'm^{a_{mj_0}}$.
  Finally, $S$ is obtained from $S'$ by adding $m'_k$ to the $(m,j_k)$th entry of $S'$ for each $k=l,\dotsc,h$.

  Otherwise, $m_{j_l}+\dotsb+m_{j_{q-1}}<a_{mj_0}\leq m_{j_1}+\dotsb+m_{j_q}$ for some $l\leq q<h$.
  In this case the sequences of leading points for $A$ are given by:
  \begin{displaymath}
    L_k =
    \begin{cases}
      L'_k & \text{for } 1\leq k<l-1,\\
      (m,j_0)\cup L'_k & \text{for } l\leq k\leq q,\\
      L'_{k-1} & \text{for } q\leq l\leq h,
    \end{cases}
  \end{displaymath}
  $p={j'_1}^{m'_1}\dotsb j_{l-1}^{m'_{l-1}}j_0^{a_{m0}}j_q^{m'_{j_1}+\dotsb + m'_{j_q}-a_{mj_0}}j_{q+1}^{m'_{q+1}}\dotsb j_h^{m'_h}$.
  Again from the definition of timed row insertion, $\rowins(p',j_0^{a_{mj_0}}) = (j_l^{m'_l}\dotsb j_{q-1}^{m'_{q-1}}j_q^{a_{mj_0}-(m'_1+\dotsb+m'_{l-1})}, p)$.
  Also, $q=q'm^{a_{mj_0}}$.
  Finally, the value $m'_k$ is added to the $(m,j_k)$th entry of $S'$ for $k=l,\dotsc,q-1$, and $a_{mj_0}-(m'_1+\dotsb+m'_{l-1})$ is added to the $(m,j_q)$th entry of $S'$ to obtain $S$.

  Thus we have seen that when $A''$ has a single non-zero entry, the effect of this entry modifies the outputs of both the row-wise real RSK algorithm and the light-and-shadows real RSK algorithm in exactly the same manner.

  If the last row of $A$ has more than one non-zero entry, they may be dealt with sequentially (from left to right) to get the same outcome.
\end{proof}
\subsection{Piecewise Linear RSK}
\label{sec:piecewise-linear-rsk}
\begin{definition}
  [Gelfand-Tsetlin Pattern]
  A \emph{Gelfand-Tsetlin pattern} of size $n$ is a triangle $T=(\lambda^{(k)}_i\mid 1\leq k \leq n, 1\leq i \leq k)$ of non-negative real numbers:
  \begin{displaymath}
    \begin{matrix}
      \lambda^{(n)}_1 & & \lambda^{(n)}_2 && \dotsb && \dotsb && \lambda^{(n)}_n\\
      &\lambda^{(n-1)}_1 && \lambda^{(n-1)}_2 && \dotsb && \lambda^{(n-1)}_{n-1} & \\
      && \ddots && \ddots && \udots &&  \\
      &&& \ddots && \udots &&&\\
      &&&& \lambda^{(1)}_1 &&&&
    \end{matrix}
  \end{displaymath}
  such that $\lambda^{(k)}_i\geq \lambda^{(k-1)}_i\geq \lambda^{(k)}_{i+1}$ for $k=2,\dotsc,n$ and $i=1,\dotsc, k-1$.
  The shape of a Gelfand-Tsetlin pattern of size $n$ is its \emph{top row}, $\lambda^{(n)}$.
\end{definition}
Whenever $n\geq k$, define $r^n_k:A_n^\dagger\to A_k^\dagger$ by taking $r^n_k(w)$ to be the timed word whose exponential string is obtained from the exponential string of $w$ by deleting all terms of the form $c^t$ where $c>k$.
It is easy to see that if $w\in A_n^\dagger$ is a timed tableau, then so is $r^n_k(w)$ for all $k=1,\dotsc,n$.

In this section, given a timed tableau $w$ in $A_n$, its shape will always be written as a real partition with $n$ components, $\lambda=(\lambda_1,\dotsc,\lambda_n)$, where $\lambda_1\geq \dotsb \lambda_n\geq 0$.

Given a timed tableau $w$ in $A_n$, define partitions $\lambda^{(k)}=(\lambda^{(k)}_1,\dotsc,\lambda^{(k)}_k)$ by $\lambda^{(k)} = \shape(r^n_k(w))$.
By Lemma~\ref{lemma:restriction-interleaf}, $\lambda_{(k-1)}$ interleaves $\lambda^{(k)}$ for all $k=2,\dotsc,n$.
Therefore the numbers $\lambda^{(k)}_i$ form a Gelfand-Tsetlin pattern, which we denote by $GT(w)$.
The shape of $\lambda$ is also the shape of $GT(w)$.
Conversely, given a Gelfand-Tsetlin pattern $T$, it is easy to reconstruct the unique timed tableau $w$ such that $T=GT(w)$.

The space of all Gelfand-Tsetlin patterns of size $n$, being defined by a finite collection of homogeneous inequalities in $\binom{n+1}2$ variables, forms a polyhedral cone in $R^{\binom{n+1}2}$.
In terms of the notation introduced in Section~\ref{sec:defin-using-timed}, we have the following lemma, which is well-known for the RSK correspondence on integer matrices (see e.g., \cite[Prop.~2.26]{kir-trop}).
\begin{lemma}
  \label{lemma:pl}
  Given $A\in M_{m\times n}(\rp)$ with $\rsk(A)=(P,Q)$, let $(\lambda^{(k)}_i)=GT(P)$ and $(\mu^{(k)}_i)=GT(Q)$, Gelfand-Tsetlin patterns of size $n$ and $m$ respectively.
  Then
  \begin{align}
    \label{eq:gt-1}
    \lambda^{(j)}_1 + \dotsb \lambda_k^{(j)} & = \max_{C\subset P_{mj}\text{ union of at most $k$ chains}} \sum_{(i,j)\in C} a_{ij},\\
    \label{eq:gt-2}
    \mu_1^{(i)} + \dotsb + \mu_k^{(i)} & = \max_{C\subset P_{in}\text{ union of at most $k$ chains}} \sum_{(i,j)\in C} a_{ij},
  \end{align}
  for all $1\leq j\leq n$, and $1\leq i\leq m$.
\end{lemma}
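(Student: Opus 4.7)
The plan is to combine three ingredients already at hand: the chain‑combinatorial reading of $a_k(u_A)$ extracted in the proof of Lemma~\ref{lemma:same-shape}, the compatibility of alphabet restriction with insertion tableaux, and the timed version of Greene's theorem (Theorem~\ref{theorem:timed-version-greene}).

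As a preliminary, I would establish the identity $P(r^n_j(u_A))=r^n_j(P(u_A))$ for every $1\leq j\leq n$. Iterating Lemma~\ref{lemma:equivalence-restriction} shows that $r^n_j(u_A)$ is Knuth equivalent to $r^n_j(P(u_A))$, and iterating Lemma~\ref{lemma:restriction-interleaf} shows that $r^n_j(P(u_A))$ is itself a timed tableau; uniqueness of the timed tableau in each Knuth class (Theorem~\ref{theorem:unique-timed-tableaux}) then forces the displayed equality. Next I would note that if $A_{\leq j}$ denotes the $m\times j$ submatrix consisting of the first $j$ columns of $A$, then $r^n_j(u_A)=u_{A_{\leq j}}$, since restriction to the alphabet $\{1,\dotsc,j\}$ simply deletes the terms coming from columns $j+1,\dotsc,n$ within each row block of $u_A$. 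Combining these two observations, $P(u_{A_{\leq j}})=r^n_j(P)$, and by the definition of $GT(P)$ this tableau has shape $(\lambda^{(j)}_1,\dotsc,\lambda^{(j)}_j)$.

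Equation~(\ref{eq:gt-1}) now falls out as follows. By Theorem~\ref{theorem:timed-version-greene} applied to $u_{A_{\leq j}}$,
\[
a_k(u_{A_{\leq j}})=\lambda^{(j)}_1+\dotsb+\lambda^{(j)}_k.
\]
On the other hand, the argument of Lemma~\ref{lemma:same-shape} carried out verbatim for the $m\times j$ matrix $A_{\leq j}$ rewrites $a_k(u_{A_{\leq j}})$ as the maximum of $\sum_{(i,j)\in C}a_{ij}$ taken over those $C\subset P_{mj}$ that can be expressed as a union of $k$ chains. Comparing the two evaluations proves (\ref{eq:gt-1}). Equation (\ref{eq:gt-2}) follows by running the same argument on $A^T$, using $Q=P(v_A)=P(u_{A^T})$: the restriction of $u_{A^T}$ to $A_i$ is the reading word of the first $i$ columns of $A^T$, i.e.\ the transpose of the $i\times n$ matrix $A^{(i)}$ formed by the first $i$ rows of $A$, and chains in $P_{ni}$ correspond bijectively to chains in $P_{in}$ under $(j,i)\leftrightarrow(i,j)$, exactly as at the end of the proof of Lemma~\ref{lemma:same-shape}.

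The main obstacle, such as it is, lies in the preliminary identity $P(r^n_j(u_A))=r^n_j(P(u_A))$; once that is in hand, everything else is a matter of invoking Greene's theorem twice. What makes the identity slightly delicate is that Lemmas~\ref{lemma:equivalence-restriction} and~\ref{lemma:restriction-interleaf} are stated only for the single-step restriction $A_n\to A_{n-1}$, so one has to iterate them $n-j$ (resp.\ $m-i$) times, but no new idea is required.
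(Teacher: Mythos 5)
Your proposal is correct and follows essentially the same route as the paper: restrict $u_A$ to the first $j$ letters/columns, identify $P(u_{A_{\leq j}})$ with $r^n_j(P)$ via the restriction lemmas and uniqueness of the tableau in a Knuth class, then invoke the timed Greene's theorem together with the chain description of Greene invariants from Lemma~\ref{lemma:same-shape}. You are actually a touch more careful than the paper in justifying $P(r^n_j(u_A))=r^n_j(P(u_A))$ — the paper cites only Lemma~\ref{lemma:equivalence-restriction}, while you also note the need for Lemma~\ref{lemma:restriction-interleaf} and Theorem~\ref{theorem:unique-timed-tableaux}, which is the argument the paper spells out earlier in Lemma~\ref{lemma:insertion-rec-algo}.
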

\begin{proof}
  Let $A_j$ is the submatrix of $A$ consisting of its first $j$ columns, then $u_{A_j} = r^n_j(u_A)$.
  By a repeated application of Lemma~\ref{lemma:equivalence-restriction}, $P(u_{A_j}) = r^n_j(P(u_A))$.
  But the $j$ row of $GT(P)$ is, by definition, the shape of $r^n_j(P(u_A))$.
  But the shape of $P(u_{A_j})$ is given by (\ref{eq:gt-1}), as explained in the proof of Lemma~\ref{lemma:same-shape}.
  The identity (\ref{eq:gt-2}) has a similar proof.
\end{proof}
\begin{corollary}
  The RSK correspondence defines a continuous piecewise linear bijection from the cone $M_{m\times n}(\rp)$ onto the cone of pairs of Gelfand-Tsetlin patterns $((\lambda^{(k)}_i), (\mu^{(k)}_i))$ of sizes $n$ and $m$ respectively, with $\lambda^{(n)}=\mu^{(m)}$ (after padding the shorter of the two with zeros).
\end{corollary}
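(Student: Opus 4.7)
The plan is to read off both the forward piecewise linearity and the inverse piecewise linearity directly from the formulas in Lemma~\ref{lemma:pl}, combined with the bijectivity already established in Theorem~\ref{theorem:rsk}.

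First I would identify the target set as a polyhedral cone: the pairs $((\lambda^{(k)}_i),(\mu^{(k)}_i))$ are cut out by the interleaving inequalities $\lambda^{(k)}_i\geq\lambda^{(k-1)}_i\geq\lambda^{(k)}_{i+1}$ (and the analogue for $\mu$), together with non-negativity and the single linear compatibility $\lambda^{(n)}=\mu^{(m)}$, so the codomain is a rational polyhedral cone in $\mathbf{R}^{\binom{n+1}{2}+\binom{m+1}{2}-\min(m,n)}$. The compatibility condition is forced by Lemma~\ref{lemma:same-shape}, and bijectivity of $\rsk$ onto the union in Theorem~\ref{theorem:rsk} together with the reconstruction $\lambda^{(k)}=\shape(r^n_k(P))$ shows $\rsk$ maps $M_{m\times n}(\rp)$ bijectively onto this cone.

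Next I would establish the piecewise linearity of the forward map. Identity (\ref{eq:gt-1}) writes the partial sum $\lambda^{(j)}_1+\dotsb+\lambda^{(j)}_k$ as the maximum over a finite collection $\mathcal C_{j,k}$ of subsets $C\subset P_{mj}$ of the linear functional $A\mapsto\sum_{(i,j')\in C}a_{ij'}$. A maximum of finitely many linear functionals is continuous and piecewise linear, and each coordinate $\lambda^{(j)}_k$ is the difference of two such maxima; the same argument, using (\ref{eq:gt-2}), handles the $\mu^{(i)}_k$. Hence every coordinate of the map $A\mapsto(GT(P),GT(Q))$ is continuous and piecewise linear on the cone $M_{m\times n}(\rp)$.

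Finally I would handle the inverse. The cleanest route is the standard fact that a continuous piecewise linear bijection between polyhedral cones has a continuous piecewise linear inverse: one may partition $M_{m\times n}(\rp)$ into finitely many closed polyhedral subcones on which every maximum in (\ref{eq:gt-1})--(\ref{eq:gt-2}) is attained by a fixed choice of $C$, so $\rsk$ is linear on each piece and bijectivity forces the inverse to be linear on the image of each piece. An entirely constructive alternative, which I would mention as a remark, is that the light-and-shadows real RSK algorithm of Section~\ref{sec:light-and-shadows-rsk} expresses $A$ from $(P,Q)$ using only additions, subtractions, and $\min$-operations, making the inverse manifestly continuous and piecewise linear. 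The only step requiring care is verifying that the pieces on which the maxima in (\ref{eq:gt-1})--(\ref{eq:gt-2}) are attained by fixed chain-unions are genuine full-dimensional polyhedral cones covering $M_{m\times n}(\rp)$, but this is immediate from the definition of a maximum of linear functionals on a cone.
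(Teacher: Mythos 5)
Your proposal is correct and tracks the paper's (implicit) reasoning closely: the paper states the corollary immediately after Lemma~\ref{lemma:pl} with no separate proof, and the intended argument is exactly what you spell out for the forward direction---partial sums of the Gelfand-Tsetlin coordinates are maxima of finitely many linear functionals of $A$ by \eqref{eq:gt-1}--\eqref{eq:gt-2}, so each coordinate is a difference of such maxima, hence continuous and piecewise linear, while bijectivity onto the cone $\{\lambda^{(n)}=\mu^{(m)}\}$ comes from Theorem~\ref{theorem:rsk} together with Lemma~\ref{lemma:same-shape}. Your treatment of the inverse is more explicit than the paper's, and the subdivision argument (RSK linear on each subcone of a finite polyhedral decomposition, bijectivity forcing the inverse to be the linear inverse on each image piece, continuity by pasting over a finite closed cover) is sound.

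Two small slips, neither fatal. First, the light-and-shadows real RSK algorithm of Section~\ref{sec:light-and-shadows-rsk} computes $(P,Q)$ \emph{from} $A$, not the reverse; a constructive piecewise linear inverse is instead supplied by the reversal of the insertion-recording algorithm via repeated deletion (Definition~\ref{definition:deletion}), as used in the proof of Theorem~\ref{theorem:rsk}. Second, the ambient dimension after imposing all the compatibility equalities (equality of the first $\min(m,n)$ top-row entries \emph{and} vanishing of the trailing ones) is $\binom{n+1}{2}+\binom{m+1}{2}-\max(m,n)=mn$, not $\binom{n+1}{2}+\binom{m+1}{2}-\min(m,n)$; this is cosmetic and does not affect the argument.
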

Lemma~\ref{lemma:pl} clearly demonstrates the piecewise linear nature of the RSK correspondence:
The algorithms of Sections~\ref{sec:defin-using-timed}--\ref{sec:light-and-shadows-rsk} allow for fast computations of this piecewise linear map.
While Eqs.~(\ref{eq:gt-1}) and (\ref{eq:gt-2}) are used to \emph{define} the RSK correspondence in \cite{kir-trop}, in this article, the RSK \emph{algorithm} is extended to real matrices, and Lemma~\ref{lemma:pl} is proved for the extended algorithm.

A more detailed analysis of the piecewise linear nature of the real RSK correspondence will be carried out in \cite{cgp}.
\subsection*{Acknowledgements}
I thank R. Ramanujam for introducing me to the work of Alur and Dill on timed automata.
I thank Xavier Viennot for teaching me about the wonderful world of the RSK correspondence, and encouraging me to work on this project.
I thank Maria Cueto, Sachin Gautam, Sridhar P. Narayanan, Digjoy Paul, K.~N.~Raghavan, Ramanathan Thinniyam, and S.~Viswanath for many interesting discussions, which helped me develop the ideas in this article.
I thank Darij Grinberg for carefully reading, and sending me corrections to an earlier version of this article.
\bibliographystyle{abbrv}
\bibliography{refs}
\end{document}